\numberwithin{equation}{section}
\def\dis{\displaystyle}
\def\ato0{{\buildrel{\dis\longrightarrow}\over{a\to0}}}
\def\Re{\mathbb{R}}
\newcommand{\ud}{\mathrm{d}}
\newtheorem{thm}{Theorem}[section]
\newtheorem{lem}[thm]{Lemma}
\newtheorem{prop}[thm]{Proposition}
\newtheorem{defi}{Definition}[section]
\newtheorem{thmlet}{Theorem}
\newtheorem{remark}{Remark}[section]
\newcommand{\R}{{\mathbb R}}
\numberwithin{equation}{section}
\begin{document}
%\begin{frontmatter}
\title[Fractional Hamiltonian systems with critical exponential growth]{Nonautonomous fractional Hamiltonian\\ system with critical exponential growth}
\author[Joao Marcos do \'O]{Jo\~ao Marcos do \'O}
\email{jmbo@pq.cnpq.br}
\address[Joao Marcos do \'O]{Department of Mathematics,
Bras\'{\i}lia University, 70910-900, Bras\'ilia, DF, Brazil}

\author[Jacques Giacomoni]{Jacques Giacomoni}
\email{jacques.giacomoni@univ-pau.fr}
\address[Jacques Giacomoni]{LMAP (UMR {E2S-UPPA} CNRS 5142), Bat. IPRA, Avenue de l'Universit\'e, F-64013 Pau, France}

\author[Pawan Kumar Mishra]{Pawan Kumar Mishra}
\email{pawanmishra@mat.ufpb.br}

\address[Pawan Kumar Mishra]{Department of Mathematics, Federal University of Para\'{\i}ba, 58051-900, Jo\~ao Pessoa-PB, Brazil}

\subjclass{Primary 35J50, 35R11, 35A15}

\keywords{elliptic systems involving square root of the Laplacian, critical growth nonlinearities of Trudinger-Moser type, linking theorem}

\date{November 3, 2018}
%%%%%%%%%%%%%%%%%%%%%%%%%%%%%%%%%%%%%%%%%%%%%%
%%%%%%%%%%%%%%%%%Abstract%%%%%%%%%%%%%%%%%%%%%
%%%%%%%%%%%%%%%%%%%%%%%%%%%%%%%%%%%%%%%%%%%%%
\begin{abstract}
In this paper, we  study the following nonlocal nonautonomous Hamiltonian system on whole $\mathbb R$ 
$$
\left\{\begin{array}{ll}
(-\Delta)^\frac12~ u +u=Q(x) g(v)&\quad\mbox{in } \mathbb R,\\
(-\Delta)^\frac12~ v+v = P(x)f(u)&\quad\mbox{in } \mathbb R,
\end{array}\right.
$$
where $(-\Delta)^\frac12$ is {the} square root  Laplacian operator. We assume that the nonlinearities $f, g$ have critical growth at $+\infty$ in the sense of Trudinger-Moser inequality and the nonnegative weights $P(x)$ and $Q(x)$ vanish at $+\infty$. Using suitable variational method combined with {the} generalized linking theorem, we obtain the existence of {at least one} positive solution for the above system.
\end{abstract}
%\end{frontmatter}
%%%%%%%%%%%%%%%%%%%%%%%%%%%%%%%%%%%%%%%%%%%
%%%%%%%%%%%%%%%%%%%%%%%%%%%%%%%%%%%%%%%%%%%
%%%%%%%%%%%%%%%%%%%%%%%%%%%%%%%%%%%%%%%%%%%
\maketitle
\section{  Introduction and main results}
In this paper, we study the following system
\begin{equation}\label{P}
\left\{\begin{array}{ll}
(-\Delta)^\frac12~ u +u=Q(x) g(v)&\quad\mbox{in } \mathbb R,\\
(-\Delta)^\frac12~ v+v = P(x)f(u)&\quad\mbox{in } \mathbb R,
\end{array}\right.
\end{equation}
 where  $(-\Delta)^\frac12$ is the square root  Laplacian  operator defined as
\[
(-\Delta)^\frac12 u(x)=-\frac{1}{2\pi} \int_{-1}^1 \frac{u(x+y)+u(x-y)-2u(x)}{|y|^2}dy.
\] 
One difficulty in studying Hamiltonian elliptic systems via variational methods is that the energy
functional is strongly indefinite, that is, its quadratic part is respectively coercive and anti-coercive
in infinite dimensional subspaces of the energy space. To deal with such difficulty we use a Galerkin method, introduced by {Rabinowitz} in \cite{MR0301587, MR0467823}.
Another important obstacle is to handle the lack of compactness, which roughly speaking,
originates from the non-compactness of the Trudinger-Moser embedding. To add on, {we face a} lack of compactness of Sobolev embedding {because} the problem is posed in whole $\mathbb R$.

In the local case {{\it i.e.}} in the case of the standard Laplacian operator, the existence of solution for Hamiltonian elliptic systems has been extensively studied in the
literature mostly in higher dimensions involving Sobolev critical growth in  bounded as well as unbounded domain of $\mathbb R^N$. For the case of a
bounded domain, see for instance, \cite{MR537061, MR1177298, MR2146049, MR1214781, MR1220982, MR1422612}. On the other hand, Hamiltonian elliptic systems in whole $\mathbb R^N$ have been explored to a lesser extend, see for example, \cite{MR1617988, MR2059153, MR1785681, MR2661494}. We refer the reader to \cite{MR3298466} for
a recent survey on this subject.

For $N=2$, Hamiltonian systems in a bounded domain in $\mathbb R^2$ have been studied by D. G. de Figueiredo, J. M. do \'O and B. Ruf \cite{MR2095447} in the critical growth range.
We cite N. Lam and G. Lu  \cite{MR3145918} for a similar result without Ambrosetti-Rabinowitz condition (see $(H3)$). In whole $\mathbb R^2$, D.G.  de Figueiredo, J. M. do \'O and J. Zhang \cite{DDJ} studied the ground state solution for the system \eqref{P} with $P=Q=1$  using the idea of generalized Nehari manifold under a monotonicity assumption on nonlinearity {{\it i.e.}} $f(t)/|t|$ and $g(t)/|t|$ are strictly increasing in $(-\infty, 0)$ and $(0, +\infty)$. 
\noindent Another reference in unbounded domain is of  M. de Souza and J. M. do \'O  \cite{MR3500314}, where  authors have considered a system of the type
 \begin{equation*}
\left\{\begin{array}{ll}
-\Delta u +V(x)u= g(v)&\quad\mbox{in } \mathbb R^2,\\
-\Delta v +V(x) v= f(u)&\quad\mbox{in } \mathbb R^2.
\end{array}\right.
\end{equation*}
Under some suitable conditions on $V$, the loss of compactness in critical case was recovered.

We know from classical fractional Sobolev embedding that $H^{s,2}(\mathbb R^N)$ is continuously embedded in $L^q(\mathbb R^N)$ for all $q\in [2, 2^*_s]$, where $2^*_s=2N/(N-2s)$. Note that formally, $2^*_s=\infty$ if $N=2s$. The only choice for this fact to be true is $N=1$ and $s=1/2$, since $s\in (0, 1).$  At this point a natural question arises: What is the optimal space where $H^{1/2,2}(\mathbb R)$ is embedded?  This answer was first given by Ozawa \cite{MR1317718} and later improved by  Iula,  Maalaoui and  Martinazzi \cite{MR3392369} which we have stated in Theorem \ref{Prop:fractional TM} below.

In case of $u=v$, $f=g$ and $P=Q$, the system \eqref{P} converts into the following scalar equation 
\begin{equation}\label{scalar}
(-\Delta)^\frac12 u+u=P(x)f(u)\;\;\textrm{in}\;\; \mathbb R.
\end{equation}

Motivated from fractional Trudinger Moser inequality as in Theorem \ref{Prop:fractional TM}, J. M. do \'O, Miyagaki and Squassina \cite{MR3399183} studied the existence of postive solutions to \eqref{scalar} with a class of weights $P$ containing the Lebesgue integrable functions and nonlinearities $f$ having subcritical and critical exponential growth (see also \cite{DEA}). In the case of an open and bounded interval we cite an earlier work of Iannizzotto and Squassina   \cite{IzSq}, where authors have proved  existence and multiplicity of positive solutions with critical exponential growth (see \cite{JPS} also). We cite \cite{WaHc, VaMbDr, XcJt, Tan, XYu} for more results in the case of Sobolev critical growth for dimension $N\geq 2$ involving the square root of Laplacian.

Inspired from the above literature, we consider in the present paper an Hamiltonian system involving the square root Laplacian operator posed in whole space ${\mathbb R}$ and investigate the effect of critical exponential growth of nonlinearities. We use a generalized linking theorem as compared to other results in the literature related to Hamiltonian systems which are heavily dependent on finite dimension Galerkin approximation to prove existence of at least one positive solution. For that, facing of the lack of compactness due to the critical growth of nonlinearities and unboundedness of the domain, we need to investigate accurately the behavior of suitable Palais Smale sequences. As far as we know there is no result regarding such class of fractional Hamiltonian systems. Our  results appear to be the first of its kind for fractional Hamiltonian systems and we expect that the insights and the methods used in the present paper apply to a wider class of semilinear elliptic operators. 
\subsection{ Critical exponential nonlinearity}
It is usual in the literature to say that {$ h $}
has critical growth of Trudinger-Moser type at $ + \infty $ if there exists $
\alpha_0 >0 $ such that
\[
    \lim_{s \rightarrow + \infty} \frac{h(s)}{e^{\alpha s^2}-1} =
    \left\{
      \begin{array}{cl}
        0 & \quad\hbox{if}\quad \alpha > \alpha_0, \\
        +\infty & \quad\hbox{if}\quad \alpha < \alpha_0 .
      \end{array}
    \right.
   \]
We note that such notion is motivated by a fractional version of Trudinger-Moser
inequality in the whole space $\mathbb{R}$ as follows.
\begin{thmlet}{(A fractional Trudinger-Moser inequality)}
\label{Prop:fractional TM}
It holds
\begin{align*}
	\sup_{u \in {H}^{1/2,2}(\mathbb R),\; \| u \|_{1/2} \le 1} \int_{\mathbb R} (e^{\alpha |u|^2}-1)~ dx
	\begin{cases}
	&< \infty, \quad \alpha \le \pi, \\
	&= \infty, \quad \alpha > \pi,
	\end{cases}
\end{align*}
\end{thmlet}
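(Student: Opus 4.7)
The plan is to split the statement in two and handle each half with a different technique. The sharp upper bound for $\alpha \le \pi$ is the real content of the theorem; the divergence for $\alpha > \pi$ follows from an explicit Moser-type sequence. The normalization in this problem uses $\|u\|_{1/2}^2 = \int_{\mathbb R}(1+|\xi|)|\hat u(\xi)|^2\,d\xi$, so in what follows I think of admissible $u$ as satisfying this bound with $1$.

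For the upper bound I would use the Bessel-potential representation of $H^{1/2,2}(\mathbb R)$: any admissible $u$ can be written $u = G*f$, where $\hat G(\xi) = (1+|\xi|)^{-1/2}$ and $f \in L^2(\mathbb R)$ with $\|f\|_{L^2}\le 1$. Since $\hat G(\xi)\sim |\xi|^{-1/2}$ as $|\xi|\to\infty$ and $G$ decays exponentially, one has $G(x)\sim c_0|x|^{-1/2}$ near the origin, so $\int_{|x|<1}G(x)^2\,dx$ is finite but $G\notin L^\infty$. I would then follow the Adams scheme: pass to nondecreasing rearrangements via O'Neil's inequality to reduce to a one-dimensional problem for $f^*$, split $u$ at the level where $u^*(t)$ crosses a suitable threshold, and estimate the pieces separately using the Taylor expansion of $e^{\alpha u^2}-1$ together with sharp $L^p$ bounds for the convolution $G*f$. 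The sharp exponent $\pi$ emerges as the exact Adams threshold for this singularity rate in one dimension, determined by the $L^2$ mass of the singular part of $G$. A less direct alternative is to lift $u$ to its harmonic extension on the upper half-plane via Caffarelli-Silvestre and invoke a sharp 2D Trudinger-Moser inequality applied to traces, but this route requires the same level of bookkeeping to recover the constant, so I would prefer the direct kernel approach.

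For the blow-up half I use the classical Moser family, adapted to the present fractional 1D setting: set
\[
M_n(x) = \frac{1}{\sqrt{\pi}}
\begin{cases}
(\log n)^{1/2}, & |x| \le 1/n,\\
\dfrac{\log(1/|x|)}{(\log n)^{1/2}}, & 1/n < |x| \le 1,\\
0, & |x| > 1.
\end{cases}
\]
A standard Gagliardo-seminorm computation yields $\|M_n\|_{1/2}^2 = 1 + o(1)$, so after a harmless renormalization one may assume $\|M_n\|_{1/2}\le 1$. On the set $\{|x|\le 1/n\}$ one has $e^{\alpha M_n^2} = n^{\alpha/\pi}$, hence
\[
\int_{\mathbb R}\bigl(e^{\alpha M_n^2}-1\bigr)\,dx \;\ge\; \frac{2}{n}\bigl(n^{\alpha/\pi}-1\bigr) \;\longrightarrow\; \infty
\]
whenever $\alpha > \pi$, proving the supremum is infinite in that range.

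The hard part is pinning down the sharp threshold $\alpha=\pi$. The strictly subcritical range $\alpha<\pi$ falls out almost immediately from the series expansion $e^{\alpha u^2}-1 = \sum_{k\ge 1}\alpha^k u^{2k}/k!$ together with the embedding $H^{1/2,2}(\mathbb R)\hookrightarrow L^p(\mathbb R)$ valid for all $p\ge 2$. The endpoint $\alpha=\pi$ demands the quantitative rearrangement/Adams argument sketched above, and any loss of constant in the convolution step destroys the sharpness; this bookkeeping, rather than any conceptual issue, is the main obstacle.
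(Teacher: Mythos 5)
The paper does not prove this statement: Theorem~\ref{Prop:fractional TM} is presented as a background result, attributed to Ozawa \cite{MR1317718} with the sharp constant $\pi$ due to Iula, Maalaoui and Martinazzi (see \cite{MR3392369}), and is used throughout the paper without proof. So there is no internal argument to compare against; the relevant comparison is with those references. Your blow-up construction for $\alpha>\pi$ via the Moser-type family $M_n$ is exactly the device the paper itself uses later (the functions $\overline\omega_k$ in \eqref{modelfunction}, with the norm asymptotics $\|\overline\omega_k\|_{1/2}^2=1+r_k$, $r_k=O(1/\log k)$, quoted from Takahasi \cite{Takahasi}), and your computation on $\{|x|\le 1/n\}$ is correct once you account for the renormalization by $1+r_n$. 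Your upper-bound strategy — write $u=G*f$ with $\widehat G(\xi)=(1+|\xi|)^{-1/2}$, note $G(x)\sim c_0|x|^{-1/2}$ near $0$, and run an Adams/O'Neil rearrangement scheme — is the same conceptual route used in the cited literature.

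That said, two steps in your sketch would not survive as written. First, you assert that the range $\alpha<\pi$ ``falls out almost immediately'' from the Taylor expansion of $e^{\alpha u^2}-1$ and the embeddings $H^{1/2,2}(\mathbb R)\hookrightarrow L^p(\mathbb R)$. Term-by-term this gives $\sum_{k\ge1}\alpha^k\|u\|_{L^{2k}}^{2k}/k!$, and the best embedding constants grow like $\sqrt{k}$ (this is precisely the content of the Gagliardo--Nirenberg/Moser estimates that feed into the Trudinger--Moser inequality); the resulting series only converges for $\alpha$ \emph{small}, not for all $\alpha<\pi$. To reach any fixed $\alpha<\pi$, let alone $\alpha=\pi$, one already needs the quantitative rearrangement argument, so this shortcut does not work. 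Second, your argument for the endpoint $\alpha=\pi$ is entirely deferred to ``bookkeeping'': the sharpness hinges on identifying the exact constant $c_0$ in $G(x)\sim c_0|x|^{-1/2}$ (equivalently, the normalization of the Riesz kernel for $(-\Delta)^{-1/4}$ in $\mathbb R$) and showing that it produces the threshold $\pi$ after O'Neil's inequality — this is the whole content of the theorem and cannot be waved through. As a proof proposal it identifies the right tools, but it does not yet constitute a proof of the sharp statement.
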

where  ${H}^{1/2,2}(\mathbb R)$ is the fractional order Sobolev space equipped with $\|\cdot \|_{1/2}$ norm which is defined in Section 2.
{
\subsection{Assumptions on the weights $P(x)$ and $Q(x)$}
We assume $P, Q\not\equiv\,0$ belonging to $ C_0(\mathbb R, \mathbb R^+)$ that is,   $P, Q$ are continuous, nonnegative and
\begin{equation}\label{vanishing}
\lim_{|x|\to +\infty} P(x)=0=\lim_{|x|\to +\infty} Q(x).
\end{equation} 
Without loss of generality,  for the ease of reference, we assume: 
\begin{equation}\label{LDT}
{ \sqrt{P(0)Q(0)}=1}
\end{equation}
\begin{remark}
We recall that the assumption \eqref{vanishing} does not imply that $P(x)$ and $Q(x)$ are {Lebesgue} integrable. For example take 
$P(x)=1$ if $|x|\leq 1$ otherwise $ {1}/{|x|}$.
\end{remark} 
}
\subsection{Assumptions on the nonlinearities $f$ and $g$}
 We consider the following assumptions on nonlinearities $f$ and $g$ which are $\alpha_0-$ critical at $+\infty$.
\begin{itemize}
\item [$(H1)$] (continuity) $ f, g:\mathbb R\rightarrow [0,  \infty)$ are continuous functions  {and $f(0)=g(0)=0$ }.
\item [$(H2)$] (behavior near the origin)\;$f(t)=o(t)$ and $g(t)=o(t)$ near the origin.
\item [$(H3)$] (Ambrosetti-Rabinowitz type condition) there exist constants $\theta>2$ such that , for all $t>0$, one has
$$
0<\theta F(t):=\theta \int_0^t  f(s)\,\ud s  \leq t f(t)\;\;\text{and}\;\; 0<\theta G(t):=\theta\int_0^t  g(s)\,\ud s\leq t g(t).$$
\item [$(H4)$] (asymptotic behavior) $\lim_{t\rightarrow +\infty}t f(t)e^{-\alpha_0 t^2}=+\infty$ and \\$\lim_{t\rightarrow +\infty}t g(t)e^{-\alpha_0 t^2}=+\infty.$
%\item[$(H5)$]  $\lim_{t\to \infty} \frac{F(t)}{tf(t)}=0.$
\end{itemize}

\subsection{Main result}
Before stating the main result of the paper, we define the following class of functions. We say that  a function $f$ belongs to the class $\mathbb C_{\mathrm P}$, if for any sequence $\{u_n\}\subset H^{1/2, 2}(\mathbb R)$ satisfying 
 for some positive constant $C$ \[ 
u_n\rightharpoonup 0\;\;\textrm{ and }\;\;\int_\mathbb R P(x)f(u_n)u_n \,\ud x<C \;\;\textrm{implies}\;\;\int_\mathbb R P(x) F(u_n)\,\ud x \to 0.
\]
\begin{thm} \label{thm1}
Assume $f$ and $g$ belong to the class $\mathbb C_{\mathrm P}$ and $\mathbb C_{\mathrm Q}$ respectively and have $\alpha_0-$critical growth at $+\infty$. Let $f$ and $g$  satisfy $(H1)-(H4)$ and $P$, $Q$ satisfy \eqref{vanishing} then the system (\ref{P}) possesses a positive solution.
\end{thm}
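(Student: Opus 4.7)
The plan is to cast system \eqref{P} as a strongly indefinite variational problem and apply a generalized linking theorem of Kryszewski--Szulkin type. I would work on $E := H^{1/2,2}(\R) \times H^{1/2,2}(\R)$ with the energy functional
$$
\Phi(u,v) = \int_\R \bigl[(-\Delta)^{1/4}u\,(-\Delta)^{1/4}v + uv\bigr]\,\ud x - \int_\R P(x)F(u)\,\ud x - \int_\R Q(x)G(v)\,\ud x,
$$
whose critical points are weak solutions of \eqref{P}. The quadratic part is strongly indefinite, and I would split $E = E^+ \oplus E^-$ along the subspaces $E^{\pm} := \{(w,\pm w) : w \in H^{1/2,2}(\R)\}$, which diagonalises the bilinear form so that $B(z,z) = \|z^+\|^2 - \|z^-\|^2$ for $z = z^+ + z^-$.

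First, I would verify the linking geometry. From $(H1)$--$(H2)$ combined with the fractional Trudinger--Moser inequality (Theorem~\ref{Prop:fractional TM}), the nonlinear terms are dominated near the origin, so $\Phi(z) \geq \alpha > 0$ on the sphere $\{z\in E^+ : \|z\| = \rho\}$ for a small $\rho > 0$. The Ambrosetti--Rabinowitz condition $(H3)$ forces $F$ and $G$ to grow at least like $|t|^\theta$ with $\theta > 2$, hence for a fixed $e \in E^+ \setminus \{0\}$ and $R > \rho$ large enough the set $\mathcal{Q} = \{z^- + te : z^- \in E^-,\ \|z^-\|\leq R,\ 0\leq t \leq R\}$ satisfies $\Phi|_{\partial \mathcal{Q}} \leq 0$, furnishing the required linking pair.

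The core analytic difficulty is to push the minimax level
$$
c := \inf_{\gamma \in \Gamma} \sup_{z \in \mathcal{Q}} \Phi(\gamma(z))
$$
below the critical threshold $\pi$ of Theorem~\ref{Prop:fractional TM}. Inserting a suitably rescaled Moser-type sequence of test functions into the sup-inf and exploiting the asymptotic growth $(H4)$ together with the normalisation \eqref{LDT} should deliver $c < \pi$. The abstract linking theorem then produces a Cerami sequence $\{z_n\}\subset E$ with $\Phi(z_n)\to c$ and $(1+\|z_n\|)\|\Phi'(z_n)\|_{E^*} \to 0$; $(H3)$ in the usual way yields the uniform bound $\|z_n\|_E \leq C$, so up to a subsequence $z_n = (u_n,v_n) \rightharpoonup z = (u,v)$ in $E$, together with $\int_\R P(x)f(u_n)u_n\,\ud x + \int_\R Q(x)g(v_n)v_n\,\ud x \leq C$.

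The final and most delicate step is to identify the weak limit $z$ as a nontrivial positive solution. Here the hypotheses $f\in\mathbb C_{\mathrm P}$ and $g\in\mathbb C_{\mathrm Q}$ are tailored precisely to this purpose: they guarantee $\int_\R P(x)F(u_n - u)\,\ud x \to 0$ (and analogously for $g$) along shifted sequences, preventing loss of energy through concentration at the Trudinger--Moser threshold, while the decay of $P, Q$ at infinity in \eqref{vanishing} rules out escape of mass to infinity via a Strauss--Lions type compactness lemma. Passing to the limit in $\Phi'(z_n)[w] \to 0$ then gives $\Phi'(z) = 0$, and $c > 0$ forces $z \neq 0$. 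Positivity of $u, v$ follows by extending $f, g$ by zero on $(-\infty,0]$, testing $\Phi'(z)$ against $(u^-,v^-)$, and invoking the strong maximum principle for $(-\Delta)^{1/2} + \mathrm{Id}$. The main obstacle throughout is the simultaneous loss of compactness from critical exponential growth and unbounded domain, which the interplay of the $\mathbb C_{\mathrm P}$, $\mathbb C_{\mathrm Q}$ hypothesis with the vanishing \eqref{vanishing} is engineered to overcome.
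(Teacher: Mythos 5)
Your skeleton is right—variational formulation on the product space, the $\pm$-diagonal decomposition to diagonalize the quadratic form, linking geometry, and Moser functions to push the minimax level below the critical threshold—but it diverges from the paper in the abstract machinery and, more seriously, contains a genuine gap at the step that is actually the hard part.

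On the framework: you propose a Kryszewski--Szulkin generalized linking theorem that outputs a Cerami sequence. The paper instead works with a Galerkin-type $(PS)^*$ condition over a family $(\mathbb{H}_n)$ of finite-dimensional spaces built from eigenfunctions of the compact operator $((-\Delta)^{-1/2},P)$, and invokes the Rabinowitz/Silva generalized mountain pass theorem (Theorem~\ref{linking}). This is not a cosmetic difference. With critical exponential growth on all of $\R$, the superposition operator $u\mapsto P(\cdot)f(u)$ is not weakly sequentially continuous, so the hypotheses of the Kryszewski--Szulkin theorem are problematic; the Galerkin approach is chosen precisely to sidestep this, since one only ever tests $I'_n$ against elements of $\mathbb{H}_n$. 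If you keep the Kryszewski--Szulkin route you must explain why it applies here.

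The more serious issue is the final sentence of your limiting argument: ``$c>0$ forces $z\neq 0$.'' For strongly indefinite functionals with non-compact nonlinear terms this implication is false in general—a $(PS)$ or Cerami sequence can converge weakly to zero while its energy stays at a positive level $c$, exactly through Trudinger--Moser concentration. The paper does not infer $z\neq 0$ directly. It argues by contradiction: \emph{suppose} $(0,0)$ is the only critical point. Then the weak limit of the $(PS)^*_c$ sequence must be $(0,0)$; membership $f\in\mathbb C_{\mathrm P}$, $g\in\mathbb C_{\mathrm Q}$ (applied to $u_n\rightharpoonup 0$, not to shifted sequences $u_n-u$) yields $\int_{\R} P(x)F(u_n)\,\ud x\to 0$ and $\int_{\R} Q(x)G(v_n)\,\ud x\to 0$; combined with $I(z_n)\to c<\pi/\alpha_0$ and $I'_n(z_n)\to 0$ this gives the crucial bound $\int_{\R} P f(u_n)u_n\,\ud x+\int_{\R} Q g(v_n)v_n\,\ud x\le \pi/\alpha_0-\delta$; the Young-type inequality of de~Figueiredo--do~\'O--Ruf then pins $\|u_n\|_{1/2}+\|v_n\|_{1/2}$ strictly below the Trudinger--Moser range, whence the nonlinear integrals vanish and $I(z_n)\to 0\neq c$, a contradiction. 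This chain of estimates is the heart of the matter and is missing from your proposal; the vague appeal to ``a Strauss--Lions type compactness lemma'' and to $\int_{\R} P(x)F(u_n-u)\,\ud x\to 0$ along shifted sequences does not replace it. Finally, note that the threshold for the minimax level is $\pi/\alpha_0$, not $\pi$; the constant $\pi$ is the exponent threshold in the Trudinger--Moser inequality itself, and the two must not be conflated.
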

\begin{remark}\label{Val}
The assumption $(H5)$ given below together with $(H2)$ implies  that $f\in \mathbb C_{\mathrm P}$ and $ g\in \mathbb C_\mathrm{Q}$.
\begin{itemize}
\item[$(H5)$]  $\displaystyle\lim_{t\to \infty} \frac{F(t)}{tf(t)}=0$ and $\displaystyle\lim_{t\to \infty} \frac{G(t)}{tg(t)}=0.$
\end{itemize}
We highlight that when $f$ and $g$ are of $O(t^2)$ near origin, $\mathbb C_{\mathrm P}$ implies the assumption $(L6)$ adopted in \cite[Theorem 4]{MR3145918} in the critical case.
 However if $f$ and $g$ decay slowly at $0$, say are of $O(t^{1+\ell})$ with $\ell<1$, $u_n\rightharpoonup 0$ and $\{\int_\mathbb R P(x)f(u_n)u_n dx\}$ is bounded do not imply that $\int_\mathbb R P(x)f(u_n)dx\to 0$. Precisely, vanishing behavior may occur for the sequence $\{P(x)f(u_n)\}$ in this case (see Remark \ref{Vanishing}).
%Hence  the class $\mathbb C_{\mathrm P}$ in this paper is different (\textcolor{red}{I am not sure about bigger}) than the class $(L6)$ and they are closer if $f$ and $g$ are of $O(t^2)$ near origin. 
 To validate our remark, we have added a proof in the appendix of this paper.
 \end{remark}

%\textcolor{red}{
%\begin{remark}
%The class $\mathbb C_{\mathrm P}$  can be replaced by the class $\mathbb D_{\mathrm P}$ defined as: We say that  a function $f$ belongs to the class $\mathbb D_{\mathrm P}$, if for any sequence $u_n\in H^{1/2, 2}(\mathbb R)$ satisfying 
%\[ 
%u_n\rightharpoonup 0\;\;\textrm{ and }\;\;\int_\mathbb R P(x)f(u_n)u_ndx \;\;\textrm{bounded,}\;\;\textrm{ implies (upto a subsequence)}\;\;\int_\mathbb R P(x) F(u_n)dx \to 0.
%\]
%One can see that if $f=O(t^2)$ and $g=O(t^2)$ near origin and 
%\end{remark}
%}

{
\begin{remark}
 The result of Theorem \ref{thm1} holds good even for the class of positive weights satisfying 
  $P\in L^\gamma(\mathbb R) $ and $Q\in L^{\gamma'}(\mathbb R)$ with $\gamma$, and $\gamma'>1$ or $P,Q\in L^1(\mathbb R)\cap C(\mathbb R, \mathbb R)$.
\end{remark}
}
\begin{remark}
We have the following observations regarding the assumptions above.
\begin{itemize} 
\item [$(i)$]  The assumption $(H3)$ is a global Ambrosetti-Rabinowtiz condition which is used to prove the boundedness of the Palais-Smale sequence. 
\item [$(ii)$] Instead of $(H4)$ one can take even a slightly weaker assumption  
$(H4)':$\;
$\lim_{t\rightarrow +\infty}t f(t)e^{-\alpha_0 t^2}\geq \eta$ and $\lim_{t\rightarrow +\infty}t g(t)e^{-\alpha_0 t^2}\geq \eta$ where $\eta$ is a sufficiently large positive real number.
%\item [$(iii)$] The assumption $(H3)':$\; ${f(t)}/{t^{1+\ell}}$   for $\ell>0$ is increasing  near origin together with $(H5)$ implies $(H2)$ and $(H3)$. \textcolor{red}{I could prove AR near origin and for large t. I dont  know how to prove globaly under this condition. Please let me know how to prove this globally.}
\item [$(iii)$] {Examples of functions satisfying $(H1)$-$(H5)$ are $f(t)=t^{\ell_1} e^{t^{\beta_1}}e^{\alpha_0 t^2}$,  $g(t)=t^{\ell_2} e^{t^{\beta_2}} e^{\alpha_0 t^2}$with $\ell_1,\ell_2>1$ and $0\leq\beta_1,\beta_2<2$.}
\end{itemize}
\end{remark}
{
\begin{remark}
We can consider different critical growth for $f$ and $g$, say $\alpha_0-$ and $\beta_0-$ critical growth respectively. Precisely, we can 
%A natural question is can we take $\alph, say a_0-$critical growth for $f$  and $\beta_0-$ critical growth for $g$ with $\alpha_0\neq \beta_0$ ? The answer is in affirmative. We can analyze a model case of the problem \eqref{P} as 
%\begin{equation*}
%\left\{\begin{array}{ll}
%(-\Delta)^\frac12~ u +u=Q(x) v (e^{\beta_0 v^2}-1)&\quad\mbox{in } \mathbb R,\\
%(-\Delta)^\frac12~ v+v = P(x)u (e^{\alpha_0 u^2}-1)&\quad\mbox{in } \mathbb R.
%\end{array}\right.
%\end{equation*}
   use the scaling $\bar v=\sqrt{\frac{\beta_0}{\alpha_0}}v$ in \eqref{P} to get
\begin{equation*}
\left\{\begin{array}{ll}
(-\Delta)^\frac12~  u + u= Q(x)\bar g(\bar v)&\quad\mbox{in } \mathbb R,\\
(-\Delta)^\frac12~\bar v+\bar v = \overline{P}(x) f(u)&\quad\mbox{in } \mathbb R,
\end{array}\right.
\end{equation*}
where $\overline{P}(x)=\sqrt{\frac{\beta_0}{\alpha_0}}P(x)$ and $\bar g(\bar v)=g\left(\sqrt{\frac{\alpha_0}{\beta_0}} v\right)$. It is easy to see that $\bar g$ has the $\alpha_0-$ critical growth as $g$ and $\overline{P}\in C_0(\mathbb R, \mathbb R)$.
\end{remark}
}

The paper is organized as follows. In section 2, we discuss the abstract framework related to the problem \eqref{P}. A generalized version of Linking geometry and related estimates are shown in section 3. In particular, in Proposition \ref{progeometry}, we give an upper estimate of the energy level that guarantees the compactness of suitable Palais Smale sequences. Here the assumption $(H4)$ plays a crucial role. This estimate is subsequently used in Section 4 where the behavior of Palais Smale sequences is described. Finally, Section 5 contains the proof of the main result of Theorem \ref{thm1}.

\section{Abstract framework}
In order to apply variational methods, we recall { the Bessel potential space} $H^{s,p}(\mathbb R^N)$  as
\begin{align*}
	H^{s,p}(\mathbb R^N) &= \left\{ u \in L^p(\mathbb R^N) \, : \, (-\Delta)^{s/2} u \in L^p(\mathbb R^N) \right\}.
\end{align*}
On the other hand, the Sobolev-Slobodeckij space $W^{s,p}(\mathbb R^N)$ is defined as
\begin{align*}
	W^{s,p}(\mathbb R^N) &= \left\{ u \in L^p(\mathbb R^N) \, : \, [u]_{s, p, \mathbb R^N} < \infty \right\},
	\end{align*}
	where 
	\begin{align*}
	[u]_{s, p,\mathbb R^N}^p &= \int_{\mathbb R^N} \int_{\mathbb R^N} \frac{|u(x)-u(y)|^p}{|x-y|^{N + sp}} \,\ud x\ud y.
\end{align*}
In general, $H^{s,p}(\mathbb R^N) \ne W^{s,p}(\mathbb R^N)$ for $p \ne 2$.
In this paper, we are interested in the limiting Sobolev embedding case {{\it i.e.}} when $2^*_s=2N/(N-2s)=\infty$ which, as observed before, corresponds to $s = 1/2$ and $p = 2$ in dimension $N=1$.
In this case, the  space $H^{1/2,2}(\mathbb R)$ coincides with the Sobolev-Slobodeckij space $W^{1/2,2}(\mathbb R)$ and
both seminorms are related as
\begin{align*}
	 \| (-\Delta)^{1/4} u \|_{L^2(\mathbb R)}^2 = \frac{1}{2\pi} [u]_{W^{1/2,2}(\mathbb R)}^2,
\end{align*}
see Proposition 3.6. in \cite{MR2944369}.
\noindent The space $H^{{1}/{2}, 2}(\mathbb R)$ is the Hilbert space with the norm defined as
\begin{equation*}
 {\|u\|_{1/2}^2=\|u\|_{L^2({\mathbb R})}^2}+\int_{\mathbb R}|(-\Delta)^{\frac{1}{4}}u|^2\,\ud x
\end{equation*}
induced from the inner product given as 
\[
\langle u, v\rangle_{1/2}=\int_{\mathbb R}(-\Delta)^\frac14 u (-\Delta)^\frac14 v \,\ud x+\int_{\mathbb R} u v \,\ud x.
\]
{We recall that $H^{1/2,2}(\mathbb R)$ is continuously embedded in $L^q({\mathbb R})$, for any $q\in [2,\infty)$.} 
%\begin{thmlet}{(A fractional Trudinger-Moser inequality )}
%\label{Prop:fractional TM}
%It holds
%\begin{align*}
%	\sup_{u \in {H}^{1/2,2}(\mathbb R),\; \| u \|_{1/2} \le 1} \int_{\mathbb R} (e^{\alpha |u|^2}-1)~ dx
%	\begin{cases}
%	&< \infty, \quad \alpha \le \pi, \\
%	&= \infty, \quad \alpha > \pi
%	\end{cases}
%\end{align*}
%\end{thmlet}

Consider the following weighted Banach space
\[
L^r(\mathbb R;P)=\left\{u:\mathbb R\rightarrow \mathbb R\;\textrm{measurable:}\; \displaystyle\int_{\mathbb R}P(x)|u|^r \,\ud x<\infty\right\}
\]
and similarly we define
\[
L^r(\mathbb R;Q)=\left\{u:\mathbb R\rightarrow \mathbb R\;\textrm{measurable:}\; \displaystyle\int_{\mathbb R}Q(x)|u|^r \,\ud x<\infty\right\}.
\] 
We state our first result related to the compactness of the embedding of $H^{1/2, 2}({\mathbb R})$ into weighted integrable spaces defined above as follows.
\begin{lem}\label{compact-a}
The space $H^{1/2, 2}(\mathbb R)$ is compactly embedded in $L^q(\mathbb R;{P})$ and $L^q(\mathbb R;Q)$ respectively for $q\in [2, \infty).$
\end{lem}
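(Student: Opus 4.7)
The plan is the standard splitting argument that turns the vanishing-at-infinity hypothesis on $P,Q$ into a compact embedding, combined with local compactness of $H^{1/2,2}$ into $L^q_{\mathrm{loc}}$.

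First I would take an arbitrary bounded sequence $\{u_n\}\subset H^{1/2,2}(\mathbb R)$ and, up to extracting a subsequence, assume $u_n\rightharpoonup u$ weakly in $H^{1/2,2}(\mathbb R)$. The goal is to show $u_n\to u$ strongly in $L^q(\mathbb R;P)$ for every $q\in[2,\infty)$; the argument for $Q$ is identical. I would use two facts recorded in the previous paragraphs of the paper: the continuous embedding $H^{1/2,2}(\mathbb R)\hookrightarrow L^q(\mathbb R)$ for every $q\in[2,\infty)$, and the Rellich-type local compactness $H^{1/2,2}(\mathbb R)\hookrightarrow\hookrightarrow L^q_{\mathrm{loc}}(\mathbb R)$, which follows from the fractional Sobolev embedding on bounded intervals.

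The key step is the split: for $R>0$,
\begin{equation*}
\int_{\mathbb R}P(x)|u_n-u|^q\,\ud x \;=\; \int_{|x|\le R}P(x)|u_n-u|^q\,\ud x \;+\; \int_{|x|>R}P(x)|u_n-u|^q\,\ud x.
\end{equation*}
For the outer piece, since $P\in C_0(\mathbb R,\mathbb R^+)$ we have $\varepsilon(R):=\sup_{|x|>R}P(x)\to 0$ as $R\to\infty$, and by the continuous embedding $\|u_n-u\|_{L^q(\mathbb R)}\le C$ uniformly in $n$, so this piece is bounded by $C\,\varepsilon(R)$. For the inner piece, $P$ is bounded on $[-R,R]$ by $\|P\|_{L^\infty}$ and local compactness gives $u_n\to u$ strongly in $L^q([-R,R])$, so this piece tends to $0$ as $n\to\infty$ for each fixed $R$.

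Given $\eta>0$, I first pick $R$ large enough that $C\,\varepsilon(R)<\eta/2$, then pick $n_0$ so that the inner piece is below $\eta/2$ for $n\ge n_0$; this yields $u_n\to u$ in $L^q(\mathbb R;P)$. There is no serious obstacle here: the only point to double-check is the local compactness for $q\in[2,\infty)$, which is immediate from the compact embedding $H^{1/2,2}(I)\hookrightarrow\hookrightarrow L^q(I)$ on bounded intervals $I\subset\mathbb R$ (a consequence of the Sobolev-Slobodeckij characterization recalled above and the Rellich-Kondrachov theorem for fractional Sobolev spaces). The same reasoning applied to $Q$ in place of $P$ completes the proof.
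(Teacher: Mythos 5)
Your proof is correct and takes essentially the same route as the paper: split the integral at a large radius $R$, control the tail uniformly in $n$ by combining the vanishing of $P$ at infinity with the continuous embedding $H^{1/2,2}(\mathbb R)\hookrightarrow L^q(\mathbb R)$, and use local compactness together with boundedness of $P$ on compact sets for the inner piece. The only (cosmetic) difference is that you apply the tail estimate directly to $|u_n-u|$, whereas the paper estimates $\int_{|x|>L}P(x)|u_n|^q$; the structure and the ingredients are identical.
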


%{
%\begin{proof}
%This proof can be simplified in using that $P,Q$ are in $L^\gamma(\mathbb R)$. indeed
%\begin{equation*}
% \int_{\mathbb R\setminus [-\rho, \rho]} P(x)|u|^q\leq \|P\|_{L^\gamma(\mathbb R\setminus[-\rho,\rho])}\|u\|_{L^{q\gamma'}(\mathbb R)})^{1/\gamma'}\leq \epsilon\|u\|_{H^{1/2, 2}({\mathbb R})}^q
%\end{equation*}
%for $\rho\geq \rho_\epsilon$, depending on $\epsilon>0$. Using in addition the local compactness, we then have compactness of the embedding of $H^{1/2, 2}(\mathbb R)$ in $L^q(\mathbb R;{P})$. This proof is also valid for $q=2$.
%\end{proof}
%}
{
\begin{proof}
We prove only the compact embedding of   $H^{1/2, 2}(\mathbb R)$ into $L^q(\mathbb R;{P})$. The other one is similar.  Let $q\in [2,\infty)$ and $\epsilon >0$. Then from the assumption \eqref{vanishing}, there exists $L=L(\epsilon)>0$ large enough, such that 
\begin{equation}\label{consvan}
P(x)<\epsilon, \textrm{ for all }\; |x|>L.
\end{equation}
If $\{u_n\}\subset H^{1/2, 2}(\mathbb R)$ is such that $u_n \rightharpoonup u$ weakly in $ H^{1/2,2}(\mathbb R)$ for some $u\in  H^{1/2, 2}(\mathbb R)$,  then using the continuous injection of $H^{1/2,2}(\mathbb R)$ in an
arbitrary $L^r(\mathbb R)$ space with $r\in [2,\infty)$ there exist $M, M_r>0$ such that 
\begin{equation}\label{nthch}
\int_{\mathbb R} |(-\Delta)^\frac14u_n|^2 \,\ud x+ \int_{\mathbb R}|u_n|^2 \,\ud x \leq  M, 
  \int_{\mathbb R}|u_n|^{r} \, \ud x\leq M_r.
\end{equation}
Now from \eqref{consvan} and \eqref{nthch}, we get 
\begin{align*}
\int_{\mathbb R\setminus [-L, L]} P(x)|u_n|^q \,\ud x&\leq \epsilon \int_{\mathbb R\setminus [-L, L]}|u_n|^q \,\ud x \\& \leq \epsilon\int_{\mathbb R}|u_n|^q \,\ud x\leq \epsilon\|u_n\|_{1/2}^q\leq \epsilon M^{q/2}.
\end{align*}
Using in addition using the compact injection of $H^{1/2,2}(\mathbb R)$ in an arbitrary $L^r_{\mathrm loc}(\mathbb R)$ space with $r\geq 2$ together with the boundedness of $P(x)$ in the compact subset of $\mathbb R$ as $P(x)$ is continuous, we have compactness of the embedding of $H^{1/2, 2}(\mathbb R)$ in $L^q(\mathbb R;{P})$ for all $q \in [2, \infty)$.
\end{proof}
}
Next, associated with our system we consider the Hilbert space $\mathbb{H}= H^{1/2,2}(\mathbb R) \times H^{1/2,2}(\mathbb R)  $ with the inner product and norm
\[
\langle (u,v),(\varphi,\psi)\rangle:=\langle u,\varphi\rangle_{1/2}+\langle v,\psi \rangle_{1/2}, \; \; 
\|(u,v)\|:=\left(\|u\|_{1/2}^2+\|v\|_{1/2}^2 \right)^{1/2}.
\]

Consider the  natural associated functional with system \eqref{P}, defined in $\mathbb{H}$ by 
\begin{equation}\label{Functional}
    I(u,v):= \Phi(u,v) - \int_{\mathbb{R}} P(x) F(u)\,\ud x-\int_{\mathbb{R}} Q(x) G(v) \, \ud x, 
\end{equation}
where the associated quadratic part is defined by
\begin{equation*}
    \Phi(u,v):=\int_{\mathbb{R}} \left( (-\Delta)^{\frac14} u (-\Delta)^{\frac14} v +  u v \right) \,\ud x.
\end{equation*}
Using standard arguments it is possible to verify that $I$ is well defined
and is of class $C^1$ with
\begin{eqnarray*}
&I'(u,v)(\phi,\psi) = \int_{\mathbb{R}} ( (-\Delta)^{\frac14} u (-\Delta)^{\frac14} \psi +
(-\Delta)^{\frac14} v (-\Delta)^{\frac14} \phi + u \psi + v \phi ) \, \ud x \\
&-
\int_{\mathbb{R}} \left(P(x)f(u) \phi + Q(x)g(v) \psi\right) \, \ud x \quad\forall(\phi,\psi) \in \mathbb{H}.
\end{eqnarray*}
Consequently, critical points of the functional $ I $ are precisely the
weak solutions to (\ref{P}).

Note that the functional $I$ is strongly indefinite, since  $\Phi(u,v)>0$ when $0\not\equiv (u,v)\in$ $\mathbb{H}^+$ and $\Phi(u, v)<0$ whenever $0\not\equiv(u,v)\in\mathbb{H}^-$, where $
\mathbb{H}^{+} := \left\{(u,u) : u \in H^{1/2, 2}(\mathbb R) \right\},
$
  $
\mathbb{H}^{-} := \left\{(u,-u) : u \in  H^{1/2, 2}(\mathbb R) \right\}$ are infinite dimensional subspaces of ${\mathbb H}.
$

In order to deal with the fact that the functional $I$ is strongly indefinite, we shall use a version of the Palais-Smale condition inspired  by the Galerkin method which we describe next.
\subsection{Palais-Smale condition and Generalized Mountain Pass Theorem}
Let $\mathbb{W}$ be a real separable Banach space, and suppose $\{ \phi_i : i\in J \subset \mathbb{N}  \}$ is a basis of $\mathbb{W}$. Given a family $\{ J_n \}_{n\in \mathbb{N}}$ such that $J_1 \subset J_2 \subset \cdots  J_n \subset \cdots J  $, and $\cup_{n\in \mathbb{N}} J_n = J$, we set for every $n\in \mathbb{N}$,
\[
\mathbb{W}_n =  \overline{ \mathrm{span}\{\phi_i : i \in J_n\}} \quad \mbox{and} \quad I_n= I|_{\mathbb{W}_n}
\]

Now we shall consider the $(PS)^*$ condition with respect to the family $(\mathbb{W}_n)$ of $\mathbb{W}$ in the following sense:
\begin{defi} Given $c\in \mathbb{R}$, we say that $(u_n) \subset \mathbb{W}$ is a $(PS)_c^*$ sequence for the functional  $I\in C^1(\mathbb{W}, \mathbb{R})$ if 
\begin{description}
\item[(i)] There exists a sequence $(n_j) \subset \mathbb{N}, \; n_j\rightarrow \infty $ as $j\rightarrow\infty$ such that $u_{n_j}\in \mathbb{W}_{n_j}$, for every $j\in \mathbb{N}$;
\item[(ii)] $I(u_{n_j})\rightarrow c$  as $j\rightarrow\infty$;
\item[(iii)] $\| I_{n_j}'(u_{n_j}) \|_{\mathbb{W}^*_{n_j}}\rightarrow 0$, as $j\rightarrow\infty$.
\end{description}
\end{defi}

\begin{defi}
	Given $c \in \mathbb{R}$, we say that the functional $I \in C^1(\mathbb{W}, \mathbb{R})$ satisfies the $(PS)^*_c$ 
	condition (with respect to the family $(\mathbb{W}_n)$) if every $(PS)_c^*$ sequence for the functional  $I$
	possesses a subsequence converging to  a critical point of $I$.
\end{defi}

We apply the following version of Generalized Mountain Pass Theorem ( see \cite{MR845785}  and \cite{MR1093380}). Considering $\mathbb{W} = X_1 \oplus X_2$, we shall suppose  $I \in C^1(\mathbb{W}, \mathbb{R})$ satisfies the $(PS)^*_c$ 
condition with respect to the family  $(\mathbb{W}_n)\subset \mathbb{W}$ with $\mathbb{W}_n = X^n_1 \oplus X^n_2, \; X_i^n \subset X_i, \; i=1,2$, $\dim(X_1^n)<\infty$ for any $n\in \mathbb{N}$.

\begin{thm} \label{linking}(Generalized Mountain Pass Theorem)
Let $\mathbb{W}= X_1 \oplus X_2$ be a real Banach space. 
Suppose $I \in C^1(\mathbb{W}, \mathbb{R})$ satisfies
\begin{description}
\item[$(I_0)$]
$\text{For every} \quad u \in X_1$, it holds $ I(u) \leq 0$.
\item[$(I_1)$]
There exist $\rho > 0$ and $\sigma>0$ such that
$$
I(u) \geq \sigma, \quad \text{for every} \quad u \in \partial B_{\rho}(0) \cap X_2.
$$
\item[$(I_2)$]
For each $n\in \mathbb{N}$, there exist $e_n \in \partial B_{1}(0) \cap X^n_2$ and $\beta \in \mathbb{R}$ such that
$$
I(u) \leq \beta, \quad \text{for every}\quad u \in X_1 \oplus \mathbb{R}^+ e_n,
$$
where
$$
X_1 \oplus \mathbb{R}^+ e_n=\left \{u=v+te_n \in X_1 \oplus \mathbb{R}e_n : v \in X_1, t \geq 0 \right \}.
$$
\end{description}
If $I$ satisfies $(PS)_c^*$ for every $c \in [\sigma,\beta]$, then $I$ possesses a critical point  $u\in \mathbb{W}$ such that $I(u)\in [\sigma,\beta]$.
\end{thm}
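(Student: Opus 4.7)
The plan is to combine a Galerkin-type reduction with a classical linking minimax, and then pass to the limit via the $(PS)_c^*$ condition assumed in the hypothesis.

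First I would fix $n \in \mathbb{N}$ and work with $I_n := I|_{\mathbb{W}_n}$ on $\mathbb{W}_n = X_1^n \oplus X_2^n$. Using the linking element $e_n \in \partial B_1(0) \cap X_2^n$ provided by $(I_2)$, I would consider the finite-dimensional half-disk
\[
Q_n := \{v + t e_n : v \in X_1^n,\; t \geq 0,\; \|v + t e_n\| \leq R_n\}
\]
and choose $R_n > \rho$ large enough that $(I_0)$ on the face $\{t = 0\}$, combined with the anti-coercive behaviour of $I$ along the finite-dimensional negative direction $X_1^n$, gives $I \leq 0$ on the outer boundary of $Q_n$. Since $(I_1)$ forces $I \geq \sigma$ on $\partial B_\rho(0) \cap X_2^n$, a classical Rabinowitz-type linking lemma on $\mathbb{W}_n$ (the obstruction set $Q_n$ being finite-dimensional) produces the non-trivial minimax level
\[
c_n := \inf_{h \in \Gamma_n} \max_{u \in Q_n} I(h(u)), \qquad \Gamma_n := \{h \in C(Q_n, \mathbb{W}_n) : h|_{\partial Q_n} = \mathrm{id}\},
\]
satisfying $\sigma \leq c_n \leq \beta$, where the upper bound comes from taking $h = \mathrm{id}$ and invoking $(I_2)$.

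Next I would apply Ekeland's variational principle (or a pseudo-gradient deformation argument on $\mathbb{W}_n$, which is available once $Q_n$ is finite-dimensional) to produce $u_n \in \mathbb{W}_n$ with $|I(u_n) - c_n| < 1/n$ and $\|I_n'(u_n)\|_{\mathbb{W}_n^*} < 1/n$. Passing to a subsequence so that $c_n \to c \in [\sigma, \beta]$, the sequence $(u_n)$ is then a $(PS)_c^*$ sequence for $I$ in the sense of the definition above, and the assumed $(PS)_c^*$ condition delivers a subsequence converging to a critical point $u \in \mathbb{W}$ of $I$ with $I(u) = c \in [\sigma, \beta]$, as required.

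The hard part will be verifying the linking geometry uniformly in $n$: condition $(I_2)$ only gives the weak bound $I \leq \beta$ on $X_1 \oplus \mathbb{R}^+ e_n$, whereas the classical linking argument requires the strictly stronger bound $I \leq 0$ on the outer boundary $\partial Q_n$. Bridging this gap means exploiting the finite-dimensionality of $X_1^n$ together with the underlying indefinite-quadratic structure of $I$, which (via the Ambrosetti-Rabinowitz-type control encoded elsewhere in the problem) ensures $I(v + t e_n) \to -\infty$ as $\|v + t e_n\| \to \infty$ inside the finite-dimensional slice $X_1^n \oplus \mathbb{R}^+ e_n$. Once this geometric set-up is made rigorous, the passage to the limit is a textbook application of the $(PS)_c^*$ framework.
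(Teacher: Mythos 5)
The paper does not actually prove Theorem \ref{linking}: it is quoted as a known result and attributed to Rabinowitz \cite{MR845785} and Silva \cite{MR1093380}, so there is no in-paper proof to compare against. That said, your sketch captures the right Galerkin-then-linking architecture, and the final step --- extracting a $(PS)_c^*$ sequence with $c_n\to c\in[\sigma,\beta]$ and invoking the compactness hypothesis --- is fine.

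The genuine gap is the one you flag yourself and then propose to patch in a way that is not allowed. When proving an \emph{abstract} theorem you may use only $(I_0)$--$(I_2)$ and $(PS)_c^*$; you cannot invoke ``the underlying indefinite-quadratic structure of $I$'' or ``Ambrosetti--Rabinowitz-type control encoded elsewhere in the problem,'' because those are features of the concrete functional in the application, not hypotheses of Theorem \ref{linking}. Hypothesis $(I_2)$ only gives the one-sided bound $I\le\beta$ on $X_1\oplus\mathbb{R}^+e_n$; it does \emph{not} give anti-coercivity on the finite-dimensional slice $X_1^n\oplus\mathbb{R}^+e_n$. Hence there is no $R_n$ for which $I\le0$ on $\partial Q_n$, and the classical Rabinowitz minimax class $\Gamma_n=\{h\in C(Q_n,\mathbb{W}_n): h|_{\partial Q_n}=\mathrm{id}\}$ cannot be used as you propose: in the pseudo-gradient/deformation step one needs the deformation to fix $\partial Q_n$, which requires $\sup_{\partial Q_n}I$ to lie strictly below the level $c_n$, and this may fail when $\sup_{\partial Q_n}I$ can be as large as $\beta\ge c_n$. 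The standard remedy (and presumably why Silva's paper \cite{MR1093380} is cited alongside Rabinowitz's) is to replace that minimax class by one of admissible deformations that are required to leave the sublevel $\{I\le\sigma/2\}$ and suplevel $\{I\ge\beta+1\}$ invariant, rather than being pinned on a large sphere, together with a quantitative deformation lemma respecting the bound $\beta$. With that class the inequalities $\sigma\le c_n\le\beta$ and the existence of an almost-critical $u_n\in\mathbb{W}_n$ at level near $c_n$ follow without anti-coercivity, and the remainder of your plan (diagonal subsequence $c_n\to c$, then $(PS)^*_c$) goes through.
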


%%%%%%%%%%%%%%%%%%%%%%%%%%%%%%%%%%%%%%%%%%%%%%%%%%%%%%%%%%%%%%%%%%%%%%%%%%%%%%%%%%%%%%%%%%

\section{The linking geometry and estimates for the critical level} 

In this section we verify that the functional $I$, defined in \eqref{Functional}, satisfies the geometrical properties of Theorem~\ref{linking}. We start by verifying the hypothesis $(I_0)$ and $(I_1)$.
{
\begin{lem}  There exist $ \rho , \ \sigma >0 $ such that
$ I ( z) \geq \sigma $, for all $ z \in \textit{S} := \partial
B_\rho  \cap \mathbb{H}^{+} $. Moreover, $I(z) \leq 0$ if $z \in \mathbb{H}^-$.
\end{lem}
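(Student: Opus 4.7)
The plan is to exploit the structure of $\mathbb H^{\pm}$, on which $\Phi$ reduces to a definite quadratic form. A direct computation gives $\Phi(u,u)=\|u\|_{1/2}^2=\tfrac12\|(u,u)\|^2$ for $(u,u)\in\mathbb H^{+}$ and $\Phi(u,-u)=-\|u\|_{1/2}^2$ for $(u,-u)\in\mathbb H^{-}$. The second assertion is then almost immediate: in the search for positive solutions one works with the standard truncation $f(t)=g(t)=0$ for $t\leq 0$, so that $F,G\geq 0$ on $\R$, whence
\[
I(u,-u)=-\|u\|_{1/2}^2-\int_\R P(x)F(u)\,\ud x-\int_\R Q(x)G(-u)\,\ud x\leq 0.
\]

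For the first assertion I combine $(H2)$ with the $\alpha_0$-critical growth of $f$ and $g$ to obtain, for any prescribed $\epsilon>0$, $q>2$ and $\alpha>\alpha_0$, a constant $C_\epsilon>0$ such that
\[
F(t)+G(t)\leq\epsilon\,t^2+C_\epsilon|t|^q\bigl(e^{\alpha t^2}-1\bigr)\quad\text{for all }t\in\R.
\]
Multiplying by $P(x)$, integrating, and applying Cauchy--Schwarz together with the elementary bound $(e^{\alpha t^2}-1)^2\leq e^{2\alpha t^2}-1$ yields
\[
\int_\R P(x)F(u)\,\ud x\leq \epsilon\!\int_\R P(x)u^2\,\ud x+C_\epsilon\Bigl(\int_\R P(x)|u|^{2q}\,\ud x\Bigr)^{1/2}\Bigl(\|P\|_\infty\!\int_\R(e^{2\alpha u^2}-1)\,\ud x\Bigr)^{1/2},
\]
and analogously for $Q$ and $G$.

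The final step is to invoke Lemma \ref{compact-a} (the underlying continuous embedding) to obtain $\int_\R P(x)|u|^{r}\,\ud x\leq C\|u\|_{1/2}^{r}$ for every $r\in[2,\infty)$, and the Trudinger--Moser inequality of Theorem \ref{Prop:fractional TM}, applied after renormalization, to bound the exponential factor uniformly provided $2\alpha\|u\|_{1/2}^2\leq\pi$. Collecting the estimates on $\mathbb H^{+}$ I arrive at
\[
I(z)\geq\Bigl(\tfrac12-C\epsilon\Bigr)\|z\|^2-\widetilde C_\epsilon\|z\|^q,\qquad q>2,
\]
so that choosing $\epsilon$ sufficiently small and then $\rho>0$ small enough to respect both the Trudinger--Moser threshold and the maximum of the right-hand side above produces the required $\sigma>0$. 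The main obstacle is the balance between the choice of $\alpha>\alpha_0$ (needed for the critical-growth bound) and the restriction $2\alpha\|u\|_{1/2}^2\leq\pi$ that Trudinger--Moser imposes on $\rho$; taking $\alpha$ only slightly larger than $\alpha_0$ keeps this ball of positive radius, which is what makes the geometry of $(I_1)$ valid in the critical regime.
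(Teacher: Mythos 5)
Your proof is correct and follows essentially the same strategy as the paper: decompose $F$ near zero (via $(H2)$) and at infinity (via critical growth), apply H\"older and the fractional Trudinger--Moser inequality of Theorem~\ref{Prop:fractional TM} on $\mathbb{H}^{+}$, and use nonnegativity of $F,G$ on $\mathbb{H}^{-}$. The only cosmetic difference is that you fix the H\"older exponents to $(2,2)$ (Cauchy--Schwarz with $(e^{\alpha t^2}-1)^2\le e^{2\alpha t^2}-1$), whereas the paper keeps general conjugate exponents $(r,s)$ and then chooses $\rho$ so that $\alpha s\rho<\pi$; both routes yield the same quadratic-minus-superquadratic lower bound and the same smallness requirement on $\rho$.
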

\begin{proof} Assumption $(H2) $ implies that, for a given $ \epsilon_0 >0 $, there exists $ t_0 >0 $ such that
$$ f(t) \leq 2 \epsilon_0 t \;\;\;\mbox{ and }\;\;\; F(t) \leq \epsilon_0 t^2, \; \mbox{
for all } \; t \leq t_0.
$$
On the other hand, given $\alpha>\alpha_0$ and $ q > 2 $ there exists a positive
constant $C_1$ such that,
$$
F(t) \leq C_1 t^q (\exp \alpha t^2 - 1) , \mbox{ for all }  t
\geq t_0.$$
  Given $u \in H^{1}(\mathbb R)$ and define ${\Omega_{t_0}}:= \{ x \in
\mathbb{R} : 0 \leq u(x) \leq t_0\}$. Thus,
\begin{eqnarray*}
\int_{\mathbb{R}} P(x)F(u) \, \ud x & = & \int_{\Omega_{t_0}} P(x)F(u) \, \ud x
+\int_{{\mathbb{R} \setminus \Omega_{t_0}}} P(x)F(u) \, \ud x
\\
& \leq & \epsilon_0 \int_{\mathbb{R}}P(x)u^2 \,\ud x + C_1
\int_{{\mathbb{R} \setminus \Omega_{t_0}}}P(x)u^q (\exp \alpha u^2 - 1) \,\ud x.
\end{eqnarray*}
Now using the H\"{o}lder's inequality with $r^{-1}+s^{-1}=1$, we
obtain
\begin{equation} \label{ineq}
\begin{aligned}
\int_{\mathbb{R}} P(x)F(u)& \,\ud x  \leq \epsilon_0 \| u
\|^2_{L^2(\mathbb{R};P)}\\
&\quad+C_1 \| u \|^q_{L^{qr}(\mathbb{R};P)}
\left(\int_{{\mathbb{R} \setminus \Omega_{t_0}}}P(x)(\exp \alpha u^2 - 1)^{s}\,\ud x\right)^\frac1s.
\end{aligned}
\end{equation}
We use the following inequality to estimate the second term in \eqref{ineq}. There exists a constant $C_2=C_2(t_0)$ such that
\begin{equation*}
(\exp \alpha t^2 - 1 )^s \leq C_2(\exp \alpha s t^2 - 1 )
\;\;\mbox{for all}\;\;t \geq t_0\;
\end{equation*}
which implies 
\begin{equation*}
\int_{{\mathbb{R} \setminus \Omega_{t_0}}}P(x)(\exp (\alpha u^2) - 1)^{s}\,\ud x\leq C_2\int_{{\mathbb{R} \setminus \Omega_{t_0}}}P(x)(\exp (\alpha s u^2) - 1)\,\ud x.
\end{equation*}
From \eqref{vanishing}, we have 
\begin{align*}
\int_{{\mathbb{R} \setminus \Omega_{t_0}}}P(x)(\exp \alpha s u^2 - 1) \
\,\ud x
&\leq C \int_{\mathbb{R}} (\exp \alpha s u^2 - 1) \
\,\ud x
\end{align*}
 which applied to (\ref{ineq}) together with Lemma \ref{compact-a} gives
\[
\int_{\mathbb{R}} P(x)F(u) \, \ud x  \leq \epsilon_0 C \| u
\|^2_{1/2}+C_3\| u
\|^q_{1/2}\left(\int_{\mathbb{R}}(\exp \alpha s u^2
- 1)\,\ud x\right)^{\frac{1}{s}}.
\]
Analogously, possibly for different positive constants $C$ and $C_3$, we have
\[
\int_{\mathbb{R}} Q(x) G(u)\, \ud x  \leq \epsilon_0 C \| u
\|^2_{1/2}+C_3\| u
\|^q_{1/2}\left(\int_{\mathbb{R}}(\exp \alpha s u^2
- 1)\, \ud x\right)^{\frac{1}{s}}.
\]
The previous inequalities used in (\ref{Functional}) imply
\begin{eqnarray*}
I(u,u) & \geq & \|u\|_{1/2}^2 - C_1\epsilon_0 \| u
\|^2_{1/2} -  C_4\| u
\|^q_{1/2}\left(\int_{\mathbb{R}}(\exp \alpha s u^2
- 1) \,\ud x\right)^{\frac{1}{s}}.
\end{eqnarray*}
The Trudinger-Moser inequality as in Theorem  \ref{Prop:fractional TM} reads
$$
\begin{aligned}
\int_{\mathbb{R}}(\exp\alpha s u^2 - 1)\, \ud x = & \int_{\mathbb{R}}\left\{\exp\left[\alpha s \|u\|_{1/2}^2\left({\frac{u}{\|u\|_{1/2}}}\right)^2\right] - 1\right\}\, \ud x \leq & C
\end{aligned}
$$
whenever $\|u\|_{1/2}^2=\rho $ and $  \alpha s \rho  < \pi $ (we can choose $\rho$ sufficiently small for this to hold good). Hence
$$
I(u,u) \geq  \| u \|_{1/2}^2 - C_1 \epsilon_0 \| u \|_{1/2}^2 - C_5 \| u \|_{1/2}^q.
$$
Therefore, since $q>2$, we can find  $\sigma > 0$  sufficiently
small, such that $I(u,u) \geq \sigma > 0$ for $\|u\|_{1/2}= \rho$.
To complete the proof, one can see that  for any
$z=(u,-u) \in \mathbb{H}^-$, 
\[
I(u,-u)=- \|u\|_{1/2}^2 - \int_{\mathbb{R}}[ P(x)F(u) + Q(x)G(u) ] \, \ud x \leq
0,
\]
by using assumption $(H1)$.
\end{proof}
}

Now we obtain an upper bound for the minimax level,  and consequently $I$ satisfies $(I_2)$. The argument strongly relies on the Trudinger-Moser inequality and the growth condition $(H4)$. This estimate depends on an intricate reasoning involving the Moser sequence to be introduced. The idea is similar to that found in the celebrated work due to H.~Brezis and  L.~Nirenberg. However here we replace the Talenti's functions by the Moser's functions, defined as truncations and dilations of the fundamental solution: given $k \in \mathbb{N}$,
\[
m_{k} (x) =
\displaystyle{\frac{\overline{\omega}_k({x})}{\|\overline{\omega}_k\|_{1/2}}},
\]
where
\begin{equation}\label{modelfunction}
\overline{\omega}_k(x) = \frac{1}{\sqrt{\pi}}\left\{
\begin{array}{ll}
(\log k)^{1/2},  & \mid x \mid \leq 1/k, \\
\displaystyle{\frac{ \log \frac{1}{\mid x \mid} }{(\log k)^{1/2}}}, & 1/k \leq
\mid x \mid \leq 1, \\
0, & \mid x \mid \geq 1.
\end{array}
\right.
\end{equation}

\begin{lem}\label{estimate-1}  
Defining $\rho_k:=\frac{\log k}{ \pi}-m_k^2$, we have that there exists $C_o>0$ such that
\[
 0 \leq \rho_k \leq C_o \quad \mbox{for every} \quad |x| \leq \displaystyle{\frac{1}{k}}.
\]
\end{lem}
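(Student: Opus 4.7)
The plan is to reduce the claim to a sharp norm asymptotic for the unnormalized Moser sequence and then perform an explicit computation. On the central interval $|x| \leq 1/k$, the function $\bar{\omega}_k$ is the constant $(\log k/\pi)^{1/2}$, so $\bar{\omega}_k^2 = \log k/\pi$ there and $m_k^2(x) = (\log k)/(\pi \,\|\bar{\omega}_k\|_{1/2}^2)$ is also constant. Substituting into the definition of $\rho_k$ gives, for every $|x| \leq 1/k$,
\[
\rho_k \;=\; \frac{\log k}{\pi}\Bigl(1 - \frac{1}{\|\bar{\omega}_k\|_{1/2}^2}\Bigr) \;=\; \frac{\log k}{\pi}\cdot\frac{\|\bar{\omega}_k\|_{1/2}^2 - 1}{\|\bar{\omega}_k\|_{1/2}^2}.
\]
The claim thus reduces to establishing two norm estimates for the un-normalized Moser function: (i) $\|\bar{\omega}_k\|_{1/2}^2 \geq 1$ for all large $k$ (to get $\rho_k \geq 0$), and (ii) $\|\bar{\omega}_k\|_{1/2}^2 - 1 \leq C/\log k$ (to get $\rho_k \leq C_o$).

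For the $L^2$ piece I would substitute $u = \log(1/|x|)$ in the explicit piecewise formula \eqref{modelfunction}, obtaining
\[
\|\bar{\omega}_k\|_{L^2(\mathbb R)}^2 \;=\; \frac{1}{\pi}\Bigl[\frac{2\log k}{k} + \frac{2}{\log k}\int_0^{\log k} u^2 e^{-u}\,\ud u\Bigr] \;=\; O\!\left(\frac{1}{\log k}\right),
\]
since the last integral tends to $\Gamma(3)=2$. For the fractional piece I would use the identity $\|(-\Delta)^{1/4}\bar{\omega}_k\|_{L^2}^2 = [\bar{\omega}_k]_{W^{1/2,2}}^2/(2\pi)$ recalled in Section~2 and decompose the Gagliardo double integral according to the three natural regions $\{|x|\le 1/k\}$, $\{1/k\le |x|\le 1\}$, $\{|x|\ge 1\}$. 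The only contribution of order $1$ is the one from the annulus--annulus block, which after the change of variables $x = e^{-s}$, $y = e^{-t}$ reduces to a kernel integral over the simplex $0 \le s,t \le \log k$ that yields exactly $2\pi$ to leading order (this is precisely the normalization that fixes the Trudinger–Moser constant $\pi$ in Theorem~\ref{Prop:fractional TM}). The remaining cross terms and boundary contributions are all $O(1/\log k)$ and carry the correct sign to ensure $\|(-\Delta)^{1/4}\bar{\omega}_k\|_{L^2}^2 \geq 1$ for $k$ large.

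Combining these two ingredients gives $\|\bar{\omega}_k\|_{1/2}^2 = 1 + \eta_k$ with $0 \leq \eta_k \leq C/\log k$. Plugging into the identity displayed above yields
\[
0 \;\leq\; \rho_k \;\leq\; \frac{\log k}{\pi}\cdot\eta_k \;\leq\; \frac{C}{\pi} \;=:\; C_o,
\]
which is the desired conclusion. The routine part of the argument is the $L^2$ estimate and the reduction; the genuine difficulty is the sharp asymptotic $\|(-\Delta)^{1/4}\bar{\omega}_k\|_{L^2}^2 = 1 + O(1/\log k)$ with the right sign, which requires carefully isolating the log-log cancellations inside the Gagliardo double integral. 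This is the step I would have to execute most carefully, since it is precisely the correction that prevents $\rho_k$ from blowing up with $\log k$.
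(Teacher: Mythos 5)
Your proof follows essentially the same route as the paper: both reduce the claim on $|x|\le 1/k$, via the identity $\rho_k = \frac{\log k}{\pi}\cdot\frac{\|\overline{\omega}_k\|_{1/2}^2-1}{\|\overline{\omega}_k\|_{1/2}^2}$, to showing $\|\overline{\omega}_k\|_{1/2}^2 = 1 + r_k$ with $r_k\ge 0$ and $r_k = O(1/\log k)$. Where the paper simply cites estimates $(2.3)$--$(2.5)$ of Takahasi for these asymptotics, you sketch (without carrying out) the corresponding $L^2$ and Gagliardo computations; one small point of caution is that you assert the stronger statement $\|(-\Delta)^{1/4}\overline{\omega}_k\|_{L^2}^2\ge 1$, whereas all that is needed (and all the cited estimates deliver) is $r_k\ge 0$ for the full $\|\cdot\|_{1/2}$ norm.
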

\begin{proof} 
Indeed, we have 
$$
\|\overline{\omega}_k\|_{1/2}^2=\int_{\R} | (-\Delta)^\frac14
\overline{\omega}_k({x})|^2 \, \ud x +\int_{B(0,1)}
|\overline{\omega}_k({x})|^2 \, \ud x = 1 + r_k.
$$ 
Here $ r_k \geq 0$ and direct calculations yield  $r_k=O\left(\frac{1}{\log k}\right)$ as $k\rightarrow \infty$ (see estimates $(2.3)-(2.5)$ in \cite{Takahasi}). Hence, $|r_k\log k|\leq C$ for $k>k_0$ leading to $\lim_{k\rightarrow\infty} r_k=0$. Therefore, if  $|x| \leq 1/k$, from the definition of $m_k$ and \eqref{modelfunction},  we get
\[
m_k^2:= \frac{\log k}{\pi {\|\overline{\omega}_k\|_{1/2}^2}}=\frac{\log k}{ \pi (1 + r_k)}=\frac{\log k}{\pi}
- \frac{(\log k) r_k}{\pi (1+ r_k)}.
\]
Hence
$$
\rho_k=\frac{(\log k) r_k}{ \pi (1+  r_k)}
$$ 
which together with the logarithmic decay estimate on $r_k$, as above, completes the proof.
\end{proof} 

\begin{prop}\label{progeometry}
	There exists $l_0 \in \mathbb{N} $ such that for all $l \geq l_0$ the corresponding Moser's function $m_l$ satisfies
	\[
	\sup_{\mathbb{R}^{+}(m_l,m_l)\oplus \mathbb{H}^- } I  < \frac{
		\pi}{\alpha_0}.
	\]
\end{prop}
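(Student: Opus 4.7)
The plan is to argue by contradiction, combining the Ambrosetti--Rabinowitz condition $(H3)$ with the sharp exponential asymptotics from $(H4)$ and Lemma~\ref{estimate-1}, in the spirit of the classical Brezis--Nirenberg / Adimurthi estimates adapted to the strongly indefinite linking setting. Suppose for contradiction that for a subsequence $l_n \to \infty$, $\sup I \geq \pi/\alpha_0$ on $\mathbb{R}^+(m_{l_n}, m_{l_n}) \oplus \mathbb{H}^-$. The functional $I$ restricted to this subspace is bounded above (the polynomial lower bound $F, G \geq c\, s^\theta$ derived from $(H3)$ dominates the quadratic part $t^2 - \|w\|_{1/2}^2$). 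By Ekeland's variational principle we pick $z_n = t_n(m_{l_n}, m_{l_n}) + (w_n, -w_n)$ with $I(z_n) \to \sup I \geq \pi/\alpha_0$ and both partial differentials $\partial_t I(z_n)$ and $\partial_w I(z_n)$ tending to zero.

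Using $\|m_l\|_{1/2}=1$, the identity $\Phi(tm_l+w,\,tm_l-w) = t^2 - \|w\|_{1/2}^2$ and the nonnegativity of $F, G, P, Q$ immediately give $t_n^2 - \|w_n\|_{1/2}^2 \geq I(z_n) \geq \pi/\alpha_0 - o(1)$. Pairing $\partial_t I(z_n) = o(1)$ with $t_n$ and $\partial_w I(z_n) = o(1)$ with $-w_n$ and adding produces, after rearranging the cross terms,
\[
2(t_n^2 - \|w_n\|_{1/2}^2) = \int_{\mathbb{R}}\!\bigl[(t_n m_{l_n}+w_n) P(x) f(t_n m_{l_n}+w_n) + (t_n m_{l_n}-w_n) Q(x) g(t_n m_{l_n}-w_n)\bigr]\,\ud x + o(1).
\]
Applying $(H3)$ pointwise yields $\int[PF+QG]\,\ud x \leq (2/\theta)(t_n^2 - \|w_n\|_{1/2}^2) + o(1)$, so that $I(z_n) \geq (1 - 2/\theta)(t_n^2 - \|w_n\|_{1/2}^2) + o(1)$. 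Since $I(z_n)$ is bounded above, $t_n^2 - \|w_n\|_{1/2}^2 \leq C_1$ uniformly; combined with $\|w_n\|_{1/2}^2 \leq t_n^2$ (nonnegativity of $\int[PF + QG]$), both $t_n$ and $\|w_n\|_{1/2}$ stay in a fixed bounded range, while $t_n^2 \geq \pi/\alpha_0 - o(1)$.

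The heart of the argument is an exponential lower bound on the right-hand side of the above identity, extracted from the plateau $\{|x| \leq 1/l_n\}$ on which $m_{l_n}$ equals the constant $m_{l_n}(0) = \sqrt{\log l_n / \pi(1+r_{l_n})}$. From $t_n^2 \geq \pi/\alpha_0 - o(1)$ and Lemma~\ref{estimate-1}, $\alpha_0 t_n^2 m_{l_n}^2(0) \geq \pi m_{l_n}^2(0) - o(1) \geq \log l_n - \pi C_o - o(1)$, hence $e^{\alpha_0 t_n^2 m_{l_n}^2(0)} \geq l_n\, e^{-\pi C_o}(1-o(1))$. Split the plateau into $A_n^+ = \{|x| \leq 1/l_n : w_n(x) \geq 0\}$ and $A_n^- = \{|x| \leq 1/l_n : w_n(x) \leq 0\}$. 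On $A_n^+$ one has $t_n m_{l_n} + w_n \geq t_n m_{l_n}(0) \to \infty$, so by $(H4)$, for any prescribed $M>0$ and $n$ large, $(t_n m_{l_n}+w_n) f(t_n m_{l_n}+w_n) \geq M e^{\alpha_0 (t_n m_{l_n}(0))^2}$ on $A_n^+$; symmetrically the $Qg$-term is bounded below by $M e^{\alpha_0 (t_n m_{l_n}(0))^2}$ on $A_n^-$. Using the continuity of $P, Q$ at $0$, the normalization $\sqrt{P(0)Q(0)}=1$ (which forces $\min(P(0),Q(0)) > 0$), and $|A_n^+| + |A_n^-| = 2/l_n$, the right-hand side of the identity is at least $2M \min(P(0),Q(0)) e^{-\pi C_o}(1-o(1))$. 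Dividing by $2$ and sending $M \to \infty$ contradicts the uniform bound on $t_n^2 - \|w_n\|_{1/2}^2$ from the second paragraph, completing the argument.

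The main obstacle is the infinite-dimensional $\mathbb{H}^-$-direction: because the critical embedding $H^{1/2,2}(\mathbb{R}) \not\hookrightarrow L^\infty$ fails, $w_n$ can oscillate freely on the tiny interval $|x| \leq 1/l_n$, so $(H4)$ cannot be applied pointwise to $t_n m_{l_n} \pm w_n$ in a naive way. The sign-splitting $A_n^+ \cup A_n^-$ resolves this: in each half one of the two arguments is at least $t_n m_{l_n}(0)$, which makes the exponential estimate via $(H4)$ and Lemma~\ref{estimate-1} robust against the bad behavior of $w_n$ on small sets.
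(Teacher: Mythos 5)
Your overall strategy agrees with the paper's: argue by contradiction, normalize so that the tested point is essentially a critical point along the ray (you use Ekeland; the paper takes the maximum along the ray, giving $I'(\eta)\eta=0$), extract an exponential lower bound from the plateau $|x|\le 1/l$, and contradict the choice of $\beta_0$ coming from $(H4)$. Your sign-splitting of the plateau into $A_n^\pm$ is a genuinely different pointwise device from the paper's AM--GM inequality $e^x+e^y\ge 2e^{(x+y)/2}$ (applied to $P\exp(\alpha_0(\tau m+u)^2)+Q\exp(\alpha_0(\tau m-u)^2)\ge 2\sqrt{PQ}\exp(\alpha_0\tau^2m^2)$); you land on a constant $\min(P(0),Q(0))$ rather than $\sqrt{P(0)Q(0)}=1$, which is harmless since $(H4)$ lets $\beta_0$ (your $M$) be arbitrarily large. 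So far so good.

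There is, however, a real gap in the boundedness step. You claim $t_n^2-\|w_n\|_{1/2}^2\le C_1$ \emph{uniformly}, deducing this from $(H3)$ and from ``$I(z_n)$ is bounded above''. But $I(z_n)$ is only bounded by $\sup_{\R^+(m_{l_n},m_{l_n})\oplus\mathbb H^-}I$, a quantity that depends on $l_n$ and is \emph{not} uniformly bounded: the same polynomial estimate you invoke shows the peak height scales like $[\,l_n/(\log l_n)^{\theta/2}\,]^{2/(\theta-2)}\to\infty$, since the plateau has measure $\sim 2/l_n$ while $m_{l_n}(0)\sim\sqrt{\log l_n/\pi}$. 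Moreover $(H3)$ alone only yields $I\le\Phi\le I/(1-2/\theta)$ for a sequence satisfying $I'(z_n)z_n=o(1)$, which is circular and produces no bound. The paper avoids this entirely: after deriving the key inequality \eqref{barcelona}, it \emph{proves} the bound $s_{j,k}\le\bar s$ by exhibiting an a priori inequality \eqref{salvador2} (linear left side versus exponential right side in $s$, plus the factor $l_k$) that is violated if $s_{j,k}>\bar s$. In other words, the boundedness must be extracted from the exponential estimate itself, not from $(H3)$. Your final contradiction can in fact be recovered along these lines, but as written the step ``$t_n^2-\|w_n\|^2\le C_1$ uniformly'' is unjustified.

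There is a second, smaller issue of rate. You write $t_n^2\ge\pi/\alpha_0-o(1)$, but the plateau estimate involves $e^{\alpha_0 t_n^2 m_{l_n}^2(0)}$ with $m_{l_n}^2(0)\sim\log l_n/\pi$; if $t_n^2=\pi/\alpha_0-\eps_n$ the exponential picks up a damping factor $l_n^{-\alpha_0\eps_n/\pi}$, which destroys the lower bound unless $\eps_n\log l_n$ stays bounded. In your single-sequence set-up you must therefore choose the Ekeland approximation error $\delta_n$ to satisfy $\delta_n\log l_n\to 0$, which is possible but needs saying. The paper sidesteps this cleanly with a double index: for each fixed Moser parameter $k$ it first sends $j\to\infty$, obtaining $s_{o,k}\ge 0$ \emph{exactly} (hence $\tau^2\ge\pi/\alpha_0$ with no error term), and only then sends $k\to\infty$.
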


\begin{proof} Suppose by contradiction that there exists a sequence $(l_k) \subset \mathbb{N} $ such that 
	$l_k \nearrow +\infty$ and for every $k \in \mathbb{N} $,
	\[
	\sup_{\mathbb{R}^{+}(m_{l_k},m_{l_k})\oplus \mathbb{H}^- } I \geq  \frac{\pi}{\alpha_0}.
	\]
	So, for every $ k\in \mathbb{N}, \; j\in \mathbb{N} $, there exists $u_{j,k} \in H^{1/2,2}(\mathbb R) $ and $\tau_{j,k} >0$ 
	such that
	\begin{equation}\label{UnB1}
	I(\eta_{j,k}) \geq  \frac{\pi}{\alpha_0}- \frac{1}{j},
	\end{equation}
	where
	\[
	\eta_{j,k} = \tau_{j,k}(m_{l_k},m_{l_k}) + (u_{j,k},-u_{j,k}) \in
	\mathbb{R}^{+}(m_{l_k},m_{l_k}) \oplus \mathbb{H}^- .
	\]
	Let $h:[0,\infty)\rightarrow \mathbb{R}$ be defined by $ h(t):=I(t\eta_{j,k})$. Since $h(0)=0$ and $\lim_{t\rightarrow+\infty} h(t)=-\infty$, there exists a maximum point $t_o\in (0,\infty)$ such that 
	\[
	h(t_o)=I(t_o \eta_{j,k}) \geq \frac{\pi}{\alpha_0}- \frac{1}{j}.
	\]
	Without loss of generality we may assume that $t_o=1$. Hence
	\begin{equation}\label{UnB2}
	I'(\eta_{j,k})\eta_{j,k}  =  0.
	\end{equation}
	Using \eqref{UnB1} and \eqref{UnB2}, we can write
	\begin{equation}\label{Itatiaia}
\begin{aligned}
	\tau^2_{j,k}  &\geq {\frac{\pi}{\alpha_0}}- \frac{1}{j} +
	\|u_{j,k}\|_{1/2}^2 +
	\int_{\mathbb{R}} P(x)F(\tau_{j,k}m_{l_k} + u_{j,k})\, \ud x\\
&\quad\quad\quad+\int_{\mathbb{R}} Q(x)G(\tau_{j,k}m_{l_k} -
	u_{j,k}) \, \ud x 
\end{aligned}
	\end{equation}
	and
	\begin{eqnarray}\label{Itatuba}
	&2\tau^2_{j,k} = 2\|u_{j,k}\|_{1/2}^2+
	\int_{\mathbb{R}} P(x) f(\tau_{j,k}m_{l_k} + u_{j,k})(\tau_{j,k}m_{l_k} + u_{j,k})
	\, \ud x\nonumber\\
&+ \int_{\mathbb{R}} Q(x)g(\tau_{j,k}m_{l_k} -
	u_{j,k})(\tau_{j,k}m_{l_k} -
	u_{j,k})\, \ud x.
	\end{eqnarray}
	From (\ref{Itatiaia}) it follows
	\begin{equation}\label{itatubal}
	\displaystyle{\frac{
			\pi}{\alpha_0}}+ s_{j,k}:=\tau_{j,k}^2 \geq
	\displaystyle{\frac{\pi}{\alpha_0}}- \frac{1}{j}.
	\end{equation}
	Therefore
	$\liminf_{j \to \infty}s_{j,k} \geq 0$.
	
	Consider $C_o>0$ given by Lemma~\ref{estimate-1} and take $\beta_0>0$ be such that 
	\begin{equation*}
	{\beta_0} > \frac{\pi}{2\alpha_0} \exp( \pi C_o).
	\end{equation*}
	
	By $(H4)$, there exists $R_0=R_0(\beta_0) > 0$ such that
	\begin{equation}\label{betalarge}
	{tf(t)}{ e^{ -\alpha_0 t^2}} \geq \beta_0
	\;\;\mbox{and}\;\; {tg(t)}{ e^{ -\alpha_0
			t^2}} \geq \beta_0,\;\; 
\mbox { for all } t \geq R_0.
	\end{equation}
	Therefore there exists $C>0$ such that $tf(t)\geq \beta_0e^{\alpha_0t^2}-C$ and $tg(t)\geq \beta_0e^{\alpha_0t^2}-C$ for $t\geq 0$.
	We also take $\bar{s}>0$ and $k_1\in {\mathbb N}$ such that for all $s\geq \bar{s}$ and $k\geq k_1$
	\begin{equation}\label{salvador2}
\begin{aligned}
	&2\left(\frac{\pi}{\alpha_0}+s\right) \exp(\pi C_o)\\& < \beta_0{l_k}  \exp\left(\alpha_0 {s}\left(\frac{\log {l_k}}{\pi} - \rho_{l_k}\right)\right)\int_{B_{{1}/{l_k}}(0)} \sqrt{P(x)Q(x)} \,\ud x. 
\end{aligned}
	\end{equation}

Next we use the inequality $e^x+e^y\geq 2e^{\frac{x+y}{2}}$ to get the following estimate 
\begin{align}\label{cabo1}
&\displaystyle\int_{{B_{{1}/{l_k}}(0)}} (P(x)  f(\tau_{j,k}m_{l_k} + u_{j,k})
	+Q(x)g(\tau_{j,k}m_{l_k} -
	u_{j,k}))(\tau_{j,k}m_{l_k} -
	u_{j,k})\, \ud x\nonumber\\
&\geq\beta_0\displaystyle\int_{{B_{{1}/{l_k}}(0)}} P(x) \exp(\alpha_0(\tau_{j,k}m_{l_k} + u_{j,k})^2)
	\,\ud x\nonumber\\
&\quad +\beta_0\displaystyle\int_{{B_{{1}/{l_k}}(0)}} Q(x) \exp(\alpha_0(\tau_{j,k}m_{l_k} -u_{j,k})^2)\,\ud x -O(1/l_k)\nonumber \\
%+ \beta_0\int_{{B_{{1}/{l_k}}(0)}} Q(x) \exp(\alpha_0(\tau_{j,k}m_{l_k} - u_{j,k})^2)\; \ud x-O(1/l_k)\nonumber\\ &\geq 
&\geq 2\beta_0\displaystyle\int_{{B_{{1}/{l_k}}(0)}}\sqrt{ P(x) Q(x)} \exp(\alpha_0(\tau_{j,k}m_{l_k})^2)
	\,\ud x-O(1/l_k).
	\end{align}
  From (\ref{Itatuba}), \eqref{itatubal}, \eqref{betalarge}, \eqref{cabo1} and Lemma~\ref{estimate-1},
	\begin{align*}
		{\frac{
				\pi}{\alpha_0}}&+ s_{j,k}\\=&\tau_{j,k}^2
\geq  {\beta_0}
		\int_{B_{{1}/{l_k}}(0)} \sqrt{P(x)Q(x)} \exp\left(\alpha_0 \tau_{j,k}^2\left(\frac{\log l_{k}}{\pi} - \rho_{l_k}\right)\right){-O(1/l_k)}\\
		&\geq  \beta_0{l_k} \exp(- \pi \rho_{l_k}) \exp\left(\alpha_0 s_{j,k}\left(\frac{\log {l_k}}{\pi} - \rho_{l_k}\right)\right)\int_{B_{{1}/{l_k}}(0)}\sqrt{P(x)Q(x)} \,\ud x \\
&\quad{-O(1/l_k)}
	\end{align*}
	which is equivalent to 
%for every $j\geq j_1$ and $k\geq k_1$
	\begin{equation}\label{barcelona}
\begin{aligned}
	&\left(\frac{
		\pi}{\alpha_0}+ s_{j,k} \right) \exp( \pi \rho_{l_k})\\
&\geq
\beta_0{l_k}  \exp\left(\alpha_0 s_{j,k}\left(\frac{\log {l_k}}{\pi} - \rho_{l_k}\right)\right)\int_{B_{{1}/{l_k}}(0)} \sqrt{P(x)Q(x)} \,\ud x {-O(1/l_k).}
	\end{aligned}
\end{equation}
	Without loss of generality, we can assume that  for $k\geq k_1$ one has $\log {l_k} > 2 C_o \pi $. Then we claim that for every $k \geq k_1$, we have $s_{j,k} \leq \bar{s}$. Indeed, otherwise from Lemma~\ref{estimate-1},
	\begin{align*}
	&{2}\left(\frac{
		\pi}{\alpha_0}+ s_{j,k} \right) \exp( \pi C_o)  \\
	&\geq \beta_0{l_k}  \exp\left(\alpha_0 s_{j,k}\left(\frac{\log {l_k}}{\pi} - \rho_{l_k}\right)\right)\int_{B_{{1}/{l_k}}(0)} \sqrt{P(x)Q(x)} \,\ud x \\
	&\geq \beta_0{l_k}  \exp\left(\alpha_0 s_{j,k}C_o\right)\int_{B_{{1}/{l_k}}(0)} \sqrt{P(x)Q(x)} \,\ud x.
	\end{align*}
	However, this contradicts \eqref{salvador2}. The claim is proved.
	
	In view of the above claim and \eqref{itatubal}, we have 
	\[
	-\frac{1}{j} \leq s_{j,k} \leq \bar{s}\quad \mbox{for every }k \geq k_1 .\]
	Thus, taking a subsequence if necessary, we may suppose that 
	\[
	\lim_{j\rightarrow\infty} s_{j,k} = s_{o,k}\in [0,\bar{s}] \quad \mbox{for every} \quad  k \geq k_1.
	\]
	Consequently, from \eqref{barcelona}, for every $k\geq k_1$,
	\begin{eqnarray*}
	&\left(\frac{
		\pi}{\alpha_0}+ s_{o,k} \right) \exp ( \pi \rho_{l_k}) \\
&\geq \beta_0{l_k}  \exp\left(\alpha_0 s_{o,k}\left(\frac{\log {l_k}}{\pi} - \rho_{l_k}\right)\right)\int_{B_{{1}/{l_k}}(0)} \sqrt{P(x)Q(x)} \,\ud x{-O(1/l_k)} \;.
	\end{eqnarray*}
	Hence, since $s_{o,k}\geq 0$ and from Lemma~\ref{estimate-1}, we get 
	\begin{eqnarray} \label{bounda}
	&\left(\frac{
		\pi}{\alpha_0}+ s_{o,k} \right) \exp ( \pi C_o )  \exp( \alpha_0 s_{o,k} C_o )\nonumber\\
	&\geq  \beta_0{l_k}  \exp\left(\alpha_0 s_{o,k}\left(\frac{\log {l_k}}{\pi}\right)\right)\int_{B_{{1}/{l_k}}(0)} \sqrt{P(x)Q(x)} \,\ud x {-O(1/l_k)}\;.
	\end{eqnarray}
	Note that, from \eqref{bounda} and $s_{o,k} \in [0, \bar{s}]$ we have that $s_{o,k}\rightarrow 0$ as $k\rightarrow \infty$. Consequently, taking $k\rightarrow\infty$ in \eqref{bounda} and using \eqref{LDT}, we get
	\[
	\frac{ \pi }{2\alpha_0} \exp(\pi C_o) \geq {\beta_0} .
	\]
	However, this contradicts our choice of $\beta_0$. Thus Proposition~\ref{progeometry} is proved.
\end{proof}
\section{The $(PS)_c^*$ condition } 
{
In this section we shall verify that the functional $I$ satisfies the $(PS)_c^*$ condition with respect to an appropriate family of subspaces $(\mathbb{H}_n)_n$ of $\mathbb{H}$.
As a consequence of compact embedding results as in Lemma \ref{compact-a}, there exists an orthonormal basis $ \{\varphi_1, \varphi_2 \ldots  \}$ in $H^{1/2,2}(\mathbb R)$ of normalized  eigenfunctions associated to the eigenvalues of the compact operator $((-\Delta)^{-1/2}, P)$ in the weighted $L^2(\mathbb R;{P})$ space. Set,}
\begin{eqnarray*}
\mathbb{E}^{+}_n & = & \mathrm{span}\{(\varphi_i,\varphi_i): i=1,\ldots,n\} \\
\mathbb{E}^{-}_n & = & \mathrm{span}\{(\varphi_i,-\varphi_i): i=1,\ldots,n\} \\
\end{eqnarray*}
Considering $e=m_k$ given by Proposition~\ref{progeometry}, we define 
\begin{eqnarray*}
\mathbb{H}_n  & = & \mathbb{H}^{+}_n \oplus \mathbb{H}^{-}_n  \quad \mbox{and}\quad I_n=I|_{\mathbb{H}_n},\quad  \mbox{where}\\
\mathbb{H}^{+}_{n} & = & \Re(e,e) \oplus \mathbb{E}^{+}_n, \\
\mathbb{H}^{-}_{n} & = & \Re(e,-e) \oplus \mathbb{E}^{-}_n. 
\end{eqnarray*}
\begin{remark}
We observe that it is easy to see that if $z:=(u,v)\in \mathbb{H}_n $ then $(v,0)\in \mathbb{H}_n $ and $(0,u)\in \mathbb{H}_n $.  

\end{remark}

Next result establishes that $(PS)^*$ sequences are bounded.

\begin{prop}\label{propbounds} Given $c\in \mathbb{R}$, let  $\{z_n\}:=\{(u_n,v_n)\} \subset \mathbb{H} $ be a  $(PS)^*_c$ sequence for the functional $I$ with respect to the family $(\mathbb{H}_n)_{n}$ defined above. Then $\{z_n\}$ is bounded in $\mathbb{H}$. Furthermore, there exists $C_1>0$ such that
\begin{eqnarray*}
\int_{\mathbb{R}} P(x)f(u_n)u_n \, \ud x \leq C_1,\;\;\; \; \;\int_{\mathbb{R}} Q(x) g(v_n)v_n
\, \ud x \leq C_1,
\\
\int_{\mathbb{R}} P(x) F(u_n) \, \ud x \leq C_1,\;\;\;\; \;\;\;\;\int_{\mathbb{R}} Q(x) G(v_n)
\, \ud x \leq C_1.
\end{eqnarray*}
\end{prop}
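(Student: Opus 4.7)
The plan is to combine the Ambrosetti--Rabinowitz condition $(H3)$ with the splitting $z_n=z_n^++z_n^-\in\mathbb{H}_n^+\oplus\mathbb{H}_n^-$ and the fact that $z_n$, $z_n^\pm$, and the swap $(v_n,u_n)$ all belong to $\mathbb{H}_n$ (by linearity and the remark preceding the proposition), so that each is an admissible direction for the $(PS)^*$ derivative estimate.

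\textbf{Step 1 (bound on the matched integrals modulo $\|z_n\|$).} From the $(PS)^*_c$ definition, $I(z_n)=c+o(1)$ and $|I'(z_n)w|\leq\varepsilon_n\|w\|$ for every $w\in\mathbb{H}_n$ with $\varepsilon_n\to 0$. Testing with $w=z_n\in\mathbb{H}_n$ yields
\[
2\Phi(z_n)-\int_{\mathbb{R}}\bigl[P(x)f(u_n)u_n+Q(x)g(v_n)v_n\bigr]\,\ud x=o(\|z_n\|).
\]
Forming $I(z_n)-\tfrac{1}{2}I'(z_n)(z_n)$ cancels the indefinite $\Phi$-term, and $(H3)$, together with the nonnegativity $f,g\geq 0$ and $f(0)=g(0)=0$ (which handles the points where $u_n$ or $v_n$ are nonpositive), yields $\tfrac{1}{2}tf(t)-F(t)\geq\bigl(\tfrac{1}{2}-\tfrac{1}{\theta}\bigr)tf(t)\geq 0$ and the analogue for $g,G$. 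Consequently
\[
\Bigl(\tfrac{1}{2}-\tfrac{1}{\theta}\Bigr)\int_{\mathbb{R}}[Pf(u_n)u_n+Qg(v_n)v_n]\,\ud x\leq |c|+o(1)+\varepsilon_n\|z_n\|,
\]
hence
\[
\int_{\mathbb{R}}\bigl[Pf(u_n)u_n+Qg(v_n)v_n\bigr]\,\ud x\leq C(1+\|z_n\|).\qquad(\star)
\]
By $(H3)$, the same bound is inherited by $\int PF(u_n)\,\ud x$ and $\int QG(v_n)\,\ud x$.

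\textbf{Step 2 (boundedness of $\|z_n\|$).} Write $z_n^+=(a_n,a_n)$ and $z_n^-=(b_n,-b_n)$, so that $u_n=a_n+b_n$, $v_n=a_n-b_n$, and $\|z_n\|^2=2\|a_n\|_{1/2}^2+2\|b_n\|_{1/2}^2$. The vector $z_n^+-z_n^-=(v_n,u_n)$ lies in $\mathbb{H}_n$ and has $\mathbb{H}$-norm equal to $\|z_n\|$. Using that the bilinear form associated to $\Phi$ vanishes on $\mathbb{H}^+\times\mathbb{H}^-$, a direct calculation gives
\[
I'(z_n)(z_n^+-z_n^-)=\|z_n\|^2-\int_{\mathbb{R}}\bigl[Pf(u_n)v_n+Qg(v_n)u_n\bigr]\,\ud x=o(\|z_n\|),
\]
so the task reduces to bounding the cross integrals in terms of $(\star)$ modulo a term absorbable on the left. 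I would apply Young's inequality
\[
Pf(u_n)|v_n|\leq\tfrac{\delta}{2}Pf(u_n)^2+\tfrac{1}{2\delta}Pv_n^2,
\]
bound $\int Pv_n^2\leq C\|v_n\|_{1/2}^2\leq C\|z_n\|^2$ by Lemma \ref{compact-a}, and control $\int Pf(u_n)^2$ via the splitting $f(t)^2\leq Ct^2+C(e^{2\alpha t^2}-1)$ given by $(H2)$ and the $\alpha_0$-critical growth, combined with the fractional Trudinger--Moser inequality (Theorem \ref{Prop:fractional TM}) on a subcritical scale; the analogous treatment applies to $\int Qg(v_n)u_n$. Choosing $\delta$ sufficiently small absorbs the resulting $\|z_n\|^2$-term on the right and yields $\|z_n\|^2\leq C+o(\|z_n\|)$, hence $\|z_n\|$ is bounded.

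The principal obstacle is Step 2, because $f(u_n)$ and $v_n$ (respectively $g(v_n)$ and $u_n$) share no a priori sign relation: the Hamiltonian structure produces cross integrals that the AR test of Step 1 does not directly control, and taming the $\alpha_0$-critical growth in this process is the delicate point. Once $\|z_n\|$ is bounded, $(\star)$ and its analogues for $PF$, $QG$ immediately give the four uniform bounds stated in the proposition.
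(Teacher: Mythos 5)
Your Step~1 is essentially the paper's argument: the AR condition applied to $I(z_n)-\tfrac12 I'(z_n)(z_n)$ gives exactly the bound $(\star)$, and testing with the swapped pair $(v_n,u_n)$ (equivalently, with $(v_n,0)$ and $(0,u_n)$ separately as the paper does) is correct and yields
\[
\|z_n\|^2 - \int_{\mathbb{R}}\bigl[P f(u_n)v_n + Q g(v_n)u_n\bigr]\,\ud x = o(\|z_n\|).
\]
The gap is in how you propose to close Step~2. Applying the elementary Young inequality $f(u_n)|v_n|\le \tfrac{\delta}{2}f(u_n)^2+\tfrac{1}{2\delta}v_n^2$ produces the term $\int_{\mathbb{R}} P f(u_n)^2\,\ud x$, and your plan to control it via the splitting $f(t)^2\le Ct^2+C(e^{2\alpha t^2}-1)$ and Theorem~\ref{Prop:fractional TM} is circular: the Trudinger--Moser inequality only bounds $\int (e^{\gamma u_n^2}-1)\,\ud x$ when $\gamma\|u_n\|_{1/2}^2<\pi$, i.e.\ it presupposes exactly the a priori bound on $\|u_n\|_{1/2}$ that you are still trying to establish. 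Without boundedness of $\|z_n\|$, the squared exponential term $e^{2\alpha u_n^2}$ is simply not integrable in a quantitative way, and squaring $f$ has made the growth strictly worse than the original.

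The paper avoids this by using the specialized Young-type inequality \eqref{FIDOORUF} from de~Figueiredo--do~\'O--Ruf, applied with $s=\tfrac{1}{C_1}f(u_n)$ and $t=V_n:=v_n/\|v_n\|_{1/2}$. The point is that the exponential lands on the \emph{normalized} function: $\int Q(e^{V_n^2}-1)\,\ud x$ is controlled unconditionally by Theorem~\ref{Prop:fractional TM} because $\|V_n\|_{1/2}=1$ and $1<\pi$. The remaining two terms, $f(u_n)(\log^+\tfrac{1}{C_1}f(u_n))^{1/2}$ (on the set where $f$ is large) and $f(u_n)^2$ (on the set where $f(u_n)\le C_1e^{1/4}$, hence bounded), both reduce, via the $\alpha_0$-critical growth bound $\log\tfrac{1}{C_1}f(u_n)\le\alpha_0(1+\epsilon)u_n^2$ and the behavior near $0$ from $(H2)$, to multiples of $\int Pf(u_n)u_n\,\ud x$, which is controlled by your $(\star)$. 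Combining these yields $\|v_n\|_{1/2}\le C(1+\varepsilon_n\|z_n\|)$, and symmetrically for $\|u_n\|_{1/2}$, giving boundedness. Your outline is right in spirit but the crucial step needs this asymmetric Young inequality rather than the quadratic one; as written, the argument does not close.
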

\begin{proof} The sequence  $\{z_n\} \subset \mathbb{H} $  being a $(PS)_c^*$ sequence for $I$ satisfies the following  
\begin{equation*}
\textbf{(a)}\;\; I(u_n,v_n)\rightarrow c\;\;\;\; \text{ and } \;\;\;\;\textbf{(b)}\;\; \|I'_n(u_n,v_n)\|_{\mathbb{H}^*_n} \rightarrow 0, \text{ as } n \rightarrow\infty .
\end{equation*}
Taking  $ (\varphi, \psi)=(u_n, v_n)/\|(u_n,v_n)\| \in  \mathbb{H}_n  $ as testing functions according to $ \textbf{(b)} $, we have for some sequence $(\varepsilon_n)_{n\in \mathbb N}$ tending to $0$:
\begin{eqnarray*}
&\left| 2 \int_{\mathbb{R}} ((-\Delta)^\frac14 u_n (-\Delta)^\frac14 v_n +  u_n v_n)\,
\ud x - \int_{\mathbb{R}} (P(x) f(u_n)u_n +Q(x)g(v_n)v_n) \,\ud x \right| \\
&\leq \varepsilon_n \| (u_n, v_n ) \|
\end{eqnarray*}
which together with $ \textbf{(b)} $ and $(H3)$ imply {for some sequence $(\delta_n)_{n\in \mathbb N}$ tending to $0$ and $\theta>2$}
\begin{align*}
&\int_{\mathbb{R}} P(x)f(u_n)u_n \,\ud x + \int_{\mathbb{R}} Q(x)g(v_n)v_n \,\ud x \\
& \leq 2
\int_{\mathbb{R}} P(x) F(u_n)\, \ud x + \int_{\mathbb{R}} Q(x) G(v_n) \, \ud x + 2c + 2 \delta_n +
\varepsilon_n \|
(u_n, v_n ) \| \\
& \leq \frac{2}{\theta}\int_{\mathbb{R}} P(x) f(u_n)u_n \,\ud x+
\int_{\mathbb{R}} Q(x) g(v_n)v_n \, \ud x + 2c  + 2 \delta_n + \varepsilon_n \| (u_n, v_n )
\|.
\end{align*}
Thus
\begin{equation}\label{goiaba}
\int_{\mathbb{R}} P(x)f(u_n)u_n \,\ud x + \int_{\mathbb{R}} Q(x)g(v_n)v_n \,\ud x \leq  \frac{\theta}{\theta-2}(1+ 2
\delta_n + \varepsilon_n   \| (u_n, v_n ) \| ).
\end{equation}
Next taking  $ (\varphi, \psi)=(v_n, 0)/ \|v_n\|_{1/2} $ and $ (\varphi,
\psi)=(0, u_n)/\|u_n\|_{1/2} $ in $ \textbf{(b)} $ we have
\begin{eqnarray*}
\| v_n \|_{1/2} ^2 - {\varepsilon_n} \| v_n \|_{1/2} &  \leq &
\int_{\mathbb{R}}
P(x)f(u_n)v_n \, \ud x, \\
\| u_n \|_{1/2} ^2 - \varepsilon_n \| u_n \|_{1/2} &  \leq &
\int_{\mathbb{R}} Q(x) g(v_n)u_n \, \ud x .
\end{eqnarray*}
Setting $U_{n}=u_{n}/ \| u_n \|_{1/2} $ and  $ V_n = v_n / \| v_n \|_{1/2}
$ we have
\begin{eqnarray}
\| v_n \|_{1/2}   &  \leq & \int_{\mathbb{R}}
 P(x) f(u_n)V_n \; \ud x  + \varepsilon_n  \label{laranja},\\
\| u_n \|_{1/2}   &  \leq & \int_{\mathbb{R}}  Q(x)g(v_n)U_n \, \ud x +
\varepsilon_n . \nonumber
\end{eqnarray}
We now rely on the following {Young type} inequality  ( see \cite{Djairo}, Lemma~2.4):
\begin{equation}\label{FIDOORUF}
s\text{ }t\leq \left\{
\begin{array}{ll}
(e^{t^{2}}-1)+s(\log ^{+}s)^{1/2}, \; & \text{ for
all }t\geq 0 \text{ and }s\geq e^{1/4}, \\
(e^{t^{2}}-1)+\frac{1}{2}s^{2}, \; & \text{ for all } t \geq 0
\text{ and }0 \leq s\leq e^{1/4}.
\end{array}
\right.
\end{equation}
{Using the  critical growth of $f$ and $g$, for any $\epsilon>0$, there exists a constant $C_1=C_1(\epsilon)>0$ such that 
\begin{equation}\label{lago}
f(s)\leq C_1 e^{\alpha_0(1+\epsilon) s^2} \quad \mbox{and}\quad  g(s)\leq C_1 e^{(\alpha_0(1+\epsilon)s^2} \quad \mbox{for all}\quad  s \in \mathbb{R}.
\end{equation}}
Let 
$A_n=\left \{x \in \mathbb{R} : \frac{1}{C_1}f(u_n)(x) \geq e^{1/4} \right \}$, $B_n=\left \{x \in \mathbb{R} : \frac{1}{C_1}f(u_n)(x) \leq e^{1/4} \right \}$. Then 
\begin{equation}\label{morango}
\begin{aligned}
&\int_{\mathbb{R}} 
P(x) f(u_n)V_n \,\ud x  \leq  C_1 \int_{\mathbb{R}} P(x)(e^{ V_n^{2}}-1) \, \ud x+\\
&\int_{A_n} 
 P(x)f(u_n) \left [\log \frac{1}{C_1} f(u_n)\right ]^{1/2} \, \ud x 
+ \frac{1}{2C_1}\int_{B_n}
 P(x)\left ( f(u_n) \right )^2  \, \ud x. 
\end{aligned}
\end{equation}

By \eqref{lago}, the second integral on the right hand side yields
\begin{align*}
\int_{A_n} P(x)f(u_n) \left [\log \frac{1}{C_1} f(u_n)\right ]^{1/2} \,\ud x
&\leq
\int_{{A_n}}
P(x) f(u_n) \left[\log  e^{\alpha_0{(1+\epsilon)}u_n^2}\right]^{1/2} \, \ud x\\
&\leq{\sqrt{\alpha_0(1+\epsilon)}} \int_{\mathbb{R}} P(x) f(u_n) u_n\, \ud x.
\end{align*}
{
Note that by using condition $(H2)$, for some fixed  $s_o>0$ there exists a positive constant $C=C(s_o)$ we have $f(s)\leq C s$ for $0<s<s_o$. Therefore,
\[
f(s)^2 \leq C f(s)s \quad \mbox{for all} \quad 0 \leq s \leq s_o
\]
and
\[
f(s)^2 \leq \frac{C_1 e^{1/4}}{s_o} f(s)s \quad \mbox{for all}\quad  s \geq s_o \quad \mbox{when}\quad f(s)\leq C_1e^{1/4}
\]
}
which implies
\begin{align*}
\int_{B_n}
 P(x)\left [ f(u_n) \right ]^2  \,\ud x 
& =
\int_{\left \{x \in B_n:u_n(x) \in [0, s_o] \right \}}
 P(x)(f(u_n))^2  \, \ud x \\
& \quad+ \int_{\left \{x \in B_n : u_n(x) \geq s_o \right \}}
 P(x)(f(u_n))^2  \,\ud x \\&\leq
C\int_{\mathbb{R}} P(x)f(u_n)u_n \,\ud x.
\end{align*}
{
Now, in the light of Theorem \ref{Prop:fractional TM},  the first integral in the right hand side of  \eqref{morango} is bounded {\it i.e.}
\begin{align*}
C_1 \int_{\mathbb{R}} P(x)(e^{V_n^{2}}-1) \, \ud x&\leq C.
\end{align*}
}
Substituting the above estimates in (\ref{morango}), we obtain {for some positive constant $C$}
\[
\int_{\mathbb{R}}
P(x) f(u_n)V_n \, \ud x \leq  C \left(1+ \int_{ \mathbb{R} }  P(x) f(u_n) u_n \, \ud x \right).
\]
This estimate together with (\ref{goiaba})-(\ref{laranja})  imply
\begin{equation}\label{mandioca}
{\| v_n \|_{1/2}   \leq {C (1+ 2 \delta_n + \varepsilon_n.}
\| (u_n, v_n ) \|)} .
\end{equation}
Repeating the argument above it follows
\begin{equation}\label{macacheira}
{\| u_n \|_{1/2}   \leq {C(1+ + 2 \delta_n + \varepsilon_n.}
\| (u_n, v_n ) \|)} .
\end{equation}
Now considering the estimates (\ref{mandioca}) and (\ref{macacheira})
we finally obtain
\[
\| (u_n, v_n ) \| \leq C
\]
which completes the proof.
\end{proof}

\begin{prop}\label{Azul} Let $c\in (0,\pi/\alpha_0)$ and $\{z_n\}\subset \mathbb{H}$ be a $(PS)^*_c$ sequence for the functional $I$, with respect to the family $(\mathbb{H}_n)_n$ of \textcolor{red}{$\mathbb{H}$}, then $\{z_n\}$ possesses a subsequence which converges weakly in $\mathbb{H}$ to a critical point of $I$.
\end{prop}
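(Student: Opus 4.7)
The argument has three parts: extraction of a weak limit, convergence of the nonlinear terms, and identification of the limit as a critical point.

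By Proposition \ref{propbounds} the $(PS)_c^*$ sequence $\{z_n\}=\{(u_n,v_n)\}$ is bounded in $\mathbb{H}$, with $\int_\mathbb{R} P f(u_n)u_n\,\ud x$, $\int_\mathbb{R} Q g(v_n)v_n\,\ud x$, $\int_\mathbb{R} P F(u_n)\,\ud x$ and $\int_\mathbb{R} Q G(v_n)\,\ud x$ uniformly bounded. Passing to a subsequence, $z_n \rightharpoonup z=(u,v)$ in $\mathbb{H}$; Lemma \ref{compact-a} yields the strong convergence $u_n \to u$ in $L^q(\mathbb R;P)$ and $v_n \to v$ in $L^q(\mathbb R;Q)$ for every $q\in[2,\infty)$, together with pointwise a.e.\ convergence (on a further subsequence).

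The crux is then to pass to the limit in the nonlinear terms, namely to show that $\int_\mathbb{R} P(x)f(u_n)\phi\,\ud x \to \int_\mathbb{R} P(x)f(u)\phi\,\ud x$ and the analogue for $g$ for every $(\phi,\psi)\in\mathbb{H}$. Here the level restriction $c<\pi/\alpha_0$ is essential. Combining the Ambrosetti-Rabinowitz identity $I(z_n)-\tfrac12 I'(z_n)z_n = c+o(1)$ with $(H3)$ I would extract an effective concentration bound on $u_n$ and $v_n$ sharp enough that, for some $\alpha>\alpha_0$ and some $r>1$,
\begin{equation*}
\sup_n \int_\mathbb{R} \bigl(e^{\alpha r u_n^2}-1\bigr)\,\ud x < \infty,
\end{equation*}
an estimate obtained from Theorem \ref{Prop:fractional TM} after appropriate rescaling of $u_n/\|u_n\|_{1/2}$. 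Coupling this with the $\alpha_0$-critical growth estimate $f(s)\le C_\epsilon e^{(\alpha_0+\epsilon)s^2}$, H\"older's inequality, the weighted compact embedding of Lemma \ref{compact-a}, and a Vitali convergence argument (a.e.\ convergence plus equi-integrability), one obtains the convergence of $\{P(x)f(u_n)\phi\}$ in $L^1(\mathbb R)$. The class assumption $f\in\mathbb{C}_P$ (and $g\in\mathbb{C}_Q$) is precisely what rules out vanishing of $\{P(x)F(u_n)\}$ after weak convergence and stabilizes the lower-order contributions.

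To identify $z$ as a critical point, fix $(\phi,\psi)\in\mathbb{H}_m$ for some $m\in\mathbb{N}$; for $n_j\ge m$, $(\phi,\psi)\in\mathbb{H}_{n_j}$ and the $(PS)_c^*$ condition gives $I'(z_{n_j})(\phi,\psi)=I'_{n_j}(z_{n_j})(\phi,\psi)\to 0$. The bilinear quadratic part passes to the limit by weak convergence in $\mathbb{H}$; the nonlinear part by the previous paragraph. Hence $I'(z)(\phi,\psi)=0$ on $\bigcup_m\mathbb{H}_m$, and density in $\mathbb{H}$ together with continuity of $I'$ yields $I'(z)=0$. The main obstacle is the middle step: extracting a sharp Trudinger-Moser compatible bound from the energy constraint $c<\pi/\alpha_0$ for a strongly indefinite Hamiltonian system, where the estimates must be performed jointly in $u_n$ and $v_n$, using the AR condition, the growth $(H4)$, and the class assumptions $\mathbb{C}_P$ and $\mathbb{C}_Q$.
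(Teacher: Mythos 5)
Your skeleton is the right one --- extract a weakly convergent subsequence via Proposition~\ref{propbounds} and Lemma~\ref{compact-a}, pass to the limit in the nonlinear terms, then test against $(\phi,\psi)\in\mathbb H_m$ and use density of $\bigcup_n\mathbb H_n$ to conclude $I'(z)=0$ --- but the middle step, which you yourself flag as ``the main obstacle,'' is both unproved and aimed in the wrong direction. You propose to use the level restriction $c<\pi/\alpha_0$ to extract a uniform exponential integrability bound $\sup_n\int_{\mathbb R}(e^{\alpha r u_n^2}-1)\,\ud x<\infty$ for some $\alpha>\alpha_0$, $r>1$, and then close with H\"older and Vitali. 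That bound does not follow from Proposition~\ref{propbounds} together with $c<\pi/\alpha_0$: the norm bound one gets on $\|u_n\|_{1/2}$ is not known to satisfy $\alpha r\|u_n\|_{1/2}^2<\pi$, so Theorem~\ref{Prop:fractional TM} gives finiteness for each $n$ but no uniformity. In the paper a norm estimate of this sharp type (\eqref{USP-1}) appears only in Section~5, and it is derived \emph{under the contradiction hypothesis} that the weak limit is $0$, using $f\in\mathbb C_{\mathrm P}$ to force $\int_{\mathbb R}PF(u_n)\,\ud x\to 0$. That route is unavailable here: in Proposition~\ref{Azul} the weak limit is not known to vanish, and the class assumptions $\mathbb C_{\mathrm P},\mathbb C_{\mathrm Q}$ play no role in its proof.

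The paper instead proves \eqref{Skank-1} by an elementary truncation argument that uses neither the level restriction nor Trudinger--Moser. Fix $\phi\in C_0^\infty(\mathbb R)$ and a smooth cutoff $\psi_M$ equal to $1$ on $[-M,M]$ and supported in $[-M-1,M+1]$. Write
\[
\int_{\mathbb R} P\,f(u_n)\phi\,\ud x
= \int_{\mathbb R} P\,(1-\psi_M(u_n))f(u_n)\phi\,\ud x
+ \int_{\mathbb R} P\,\psi_M(u_n)f(u_n)\phi\,\ud x .
\]
The first term is supported in $\{u_n>M\}$ and is bounded by $\|\phi\|_\infty M^{-1}\int_{\mathbb R}P f(u_n)u_n\,\ud x\le C\|\phi\|_\infty/M$, uniformly in $n$, by Proposition~\ref{propbounds}. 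The second term has integrand dominated by $P\cdot\sup_{0\le t\le M+1}f(t)\cdot|\phi|\in L^1$ (compact support of $\phi$, continuity of $P$ and $f$), so dominated convergence applies. Comparing $\int P\,\psi_M(u_o)f(u_o)\phi$ with $\int P f(u_o)\phi$ costs another $O(1/M)$ by the same tail estimate (with $\int Pf(u_o)u_o<\infty$ from Fatou). Letting $M\to\infty$ gives \eqref{Skank-1}, and the analogous \eqref{Skank-2} for $g$. The key point you should internalize: the \emph{only} nonlinear ingredient needed for Proposition~\ref{Azul} is the uniform $L^1$ bound on $P(x)f(u_n)u_n$ and $Q(x)g(v_n)v_n$ from Proposition~\ref{propbounds}; the constraint $c<\pi/\alpha_0$ and the class assumptions are reserved for Section~5, where one must upgrade weak to strong convergence.
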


\begin{proof}
By Proposition~\ref{propbounds}, $\{z_n\}$ is bounded sequence in $\mathbb{H}$.
Hence, invoking Lemma~\ref{compact-a} and the fact that $\mathbb{H}$ is a Hilbert space, $\{z_n\}$ has a subsequence (still denote by $\{z_n\}$) such that

\begin{eqnarray}
z_n & := & (u_n,v_n) \rightharpoonup z_o:=(u_o, v_o) \mbox{ in } \mathbb{H} , \label{Tambauzinho-1} \\
u_n \rightarrow u_o  & \mbox{ and } & v_n \rightarrow v_o \mbox{
in } L^q(\mathbb{R};P) \text{ and } L^q(\mathbb{R};Q), \nonumber
\; \forall \,{q \in[2,\infty)}, \\
u_n(x) \rightarrow u_o(x) & \mbox{ and } & v_n(x)
\rightarrow v_o(x) \mbox{ a.e. in } \mathbb{R}.\nonumber
\end{eqnarray}

We also note that, in view of Proposition~\ref{propbounds}, there exists $C>0$ such that, for every $n\in \mathbb{N}$,
\begin{equation}\label{saia-rodada}
\int_{\mathbb{R}} P(x) f(u_n)u_n \, \ud x \leq C,\quad \int_{\mathbb{R}} Q(x)g(v_n)v_n \,
\ud x \leq C.
\end{equation}
We claim that for any $ \phi \in C_0^\infty(\mathbb R)$ with $\mathrm {supp}\; \phi=K\subset \subset \mathbb R$, we have
\begin{equation}\label{Skank-1}
\int_{\mathbb R} P(x) f(u_n) \phi
\, \ud x \rightarrow \int_{\mathbb R} P(x) f(u_o) \phi
\, \ud x.
\end{equation}
{
Indeed, let $M>0$ and $\psi_M$ a smooth cut-off function such that
\begin{equation*}
\psi_M(t)=\left\{
\begin{array}{rl}
1 & \textrm{for}\;\; |t|\leq M\\
0& \textrm{for}\;\; |t|\geq M+1.
\end{array}
\right.
\end{equation*}
Then we have 
\begin{align*}
&\int_{\mathbb R} P(x) f(u_n) \phi
\, \ud x\\
&=\int_{\mathbb R} P(x)(1-\psi_M(u_n)) f(u_n) \phi \,\ud x+\int_{\mathbb R} P(x) \psi_M(u_n)f(u_n) \phi
\, \ud x\\
&=\int_{\mathbb R\cap \{u_n>M\}} P(x)(1-\psi_M(u_n)) f(u_n) \phi \,\ud x\\
&\quad +\int_{\mathbb R\cap  \{u_n<M+1\}} P(x) \psi_M(u_n)f(u_n) \phi
\, \ud x\\
&\leq \frac{\|\phi\|_\infty}{M}\int_{\mathbb R} P(x)f(u_n) u_n \,\ud x+\int_{\mathbb R\cap  \{u_n<M+1\}} P(x) \psi_M(u_n)f(u_n) \phi
\, \ud x.
\end{align*}
In the above inequality, using the Lebesgue dominated convergence theorem in the second integral and  \eqref{saia-rodada} in the first integral, we get for large $n$
\begin{align}\label{jrep}
\int_{\mathbb R} P(x) f(u_n) \phi
\, \ud x
&\leq \frac{C \|\phi\|_\infty}{M}+\int_{\mathbb R\cap  \{u_0\leq M+1\}} P(x) \psi_M(u_0)f(u_0) \phi
\, \ud x.
\end{align}
Now we estimate the second integral, keeping in mind the fact that $\psi_M(u_n)\to 1$ a.e. in $\mathbb R$ for $M$ large enough as
\begin{equation}\label{jrep2}
\begin{aligned}
\left|\int_{\mathbb R} \right.&P(x)f(u_0) \phi \,\ud x\left.-\int_{\mathbb R} P(x)\psi_M(u_0)f(u_0)\phi \,\ud x\right|\\&\leq \frac{\|\phi\|_\infty}{M}\int_{\mathbb R} P(x)f(u_0) u_0\,\ud x
=O(1/M).
\end{aligned}
\end{equation}
 Now for large $M>0$, we combine \eqref{jrep2} with \eqref{jrep} to get our required result. }
Similarly one can show that
\begin{equation}\label{Skank-2}
\int_{\mathbb R} Q(x) g(v_n) \psi
\, \ud x \rightarrow \int_{\mathbb R} Q(x) g(v_o) \psi
\, \ud x.
\end{equation}

Finally using \eqref{Tambauzinho-1}, \eqref{Skank-1} and \eqref{Skank-2} we conclude that
\begin{equation}\label{criticalp}
I'(u_o,v_o) (\phi, \psi) =0 \quad \mbox{for all} \quad (\phi, \psi)\in {C_0^\infty(\mathbb R)\times C_0^\infty(\mathbb R)}.
\end{equation}
{Using the fact that $\cup  \mathbb{H}_n$ is dense in $\mathbb{H}$  it follows that \eqref{criticalp}  holds for any  $(\phi,  \psi) \in \mathbb{H}$.}
 In other words, $z_o$ is a critical point of $I$. Proposition~\ref{Azul} is now proved. 
\end{proof}

\section{Proof of Theorem~\ref{thm1}} 

Arguing by contradiction, we suppose that the origin is the only critical point of the functional $I$. Note that the positivity of nontrivial weak solutions follows from the classical regularity theory and strong maximum principle for fractional Laplacian problems. 

We shall verify that, under this assumption, $I$ satisfies 
$(PS)^*_c$ for every $c\in (0, \pi/\alpha_0)$.

Let $\{z_n\}=\{(u_n,v_n)\} \in \mathbb{H}_n$ be such that 
\begin{eqnarray}
& I(z_n) \rightarrow c   \in (0, \pi/\alpha_0),  \label{bora-1}\\
& \| I_{n}'(z_n) \|_{\mathbb{H}^*_{n}}\rightarrow 0. &  \label{bora-2}
\end{eqnarray}
By Lemma~\ref{compact-a}, Propositions \ref{progeometry}, \ref{propbounds}
and the fact that the origin is the only possible critical point of $I$, we may assume that there exists $C>0$ such that
\begin{eqnarray}
\| z_n \| & \leq & C, \quad  \mbox{in} \quad \mathbb{H}  \label{Joaninha-1}\\
z_n=(u_n,v_n)  & \rightharpoonup & (0,0)  \quad  \mbox{weakly in} \quad \mathbb{H} , \nonumber \\
u_n & \rightarrow & 0  \quad  \mbox{strongly in } \quad L^q(\mathbb{R}; P), \nonumber\; \forall {q \in[2,\infty)},\\
v_n & \rightarrow & 0  \quad  \mbox{strongly in } \quad L^q(\mathbb{R}; Q), \nonumber
\; \forall {q \in [2,\infty)},
 \\
z_n(x) =(u_n(x),v_n(x)) &  \rightarrow & (0,0) \quad  \mbox{a. e. in } \quad  \mathbb{R},\nonumber\\
\int_{\mathbb{R}} P(x)f(u_n)u_n \, \ud x \leq C,  & \; & \int_{\mathbb{R}} Q(x)g(v_n)v_n \, \ud x
\leq C. \label{mulecada-1}
\end{eqnarray}
Since $f$ and $g$ belong to the class $\mathbb C_{\mathrm P}$ and $\mathbb C_{\mathrm Q}$ respectively, we get 
\begin{equation}\label{Muricoca}
\int_{\mathbb{R}} P(x)F(u_n) \, \ud x \rightarrow  0, \quad   \int_{\mathbb{R}} Q(x)G(v_n) \, \ud x \rightarrow  0 .
\end{equation}
From \eqref{Muricoca} and \eqref{bora-1}, we may find $\delta>0$ such that, for $n$ sufficiently large,
\[
\int_{\mathbb{R}} \left( (-\Delta)^\frac{1}{4}u_n (-\Delta)^\frac{1}{4} v_n + u_n v_n \right) \, \ud x
<  \frac{\pi}{\alpha_0}-2 \delta.
\]
This inequality, combined with  \eqref{bora-2} and \eqref{Joaninha-1}, implies, for $n$ sufficiently large,
\begin{equation}\label{MST-1}
\int_{\mathbb{R}} P(x) f(u_n)u_n\,\ud x + \int_{\mathbb{R}} Q(x)g(v_n)v_n \,\ud x \leq  \frac{\pi}{\alpha_0}-\delta .
\end{equation}

We claim that, given $\beta>\alpha_0$, for $n$ sufficiently large,
\begin{equation}\label{USP-1}
\|u_n\|_{1/2}+\|v_n\|_{1/2} \leq \left( \frac{\beta }{\alpha_0}\left( \frac{\pi }{\alpha_0}-\delta \right) \right)^{1/2}\;.
\end{equation}

In order to prove this claim we note that $\|u_n\|_{1/2}\not \rightarrow 0$. Indeed, assuming otherwise, from \eqref{Joaninha-1} we get that
\begin{equation*} 
\lim_{n\rightarrow\infty}\int_{\mathbb{R}} \left( (-\Delta)^\frac{1}{4} u_n (-\Delta)^\frac{1}{4} v_n + u_n v_n \right) \, \ud x=0
\end{equation*}
which together with \eqref{bora-1} and \eqref{Muricoca}, leads to a contradiction. Thus, we assume that $\|u_n\|_{1/2}\geq b>0$ {and $\|v_n\|_{1/2}\geq b>0$} for all $n$.
Now, set $\tilde u_n = (\frac{ \pi}{\alpha_0}- \delta)^{{1}/{2}}\frac{u_n}{\|u_n\|_{1/2}}$, $s= \frac{g(v_n)}{\sqrt \alpha_0}$ and $t= {\sqrt \alpha_0} \tilde u_n$. Then, using \eqref{bora-2} and \eqref{FIDOORUF} with $s$
and $t$ as above results into
\begin{equation}\label{3integral}
\begin{aligned}
&\left (\frac{\pi}{\alpha_0}- \delta \right )^{{1}/{2}}\|u_n\|_{1/2}\\&=\int_{\mathbb{R}}Q(x)g(v_n)\tilde u_n\, \ud x + {o_n(1)}\leq
\int_{\mathbb{R}}Q(x)(e^{\alpha_0 {\tilde u_n}^2}-1)\, \ud x \\ 
&\quad+
\int_{\left \{x \in \mathbb{R} : g(v_n)(x) \geq \sqrt{\alpha_0} e^{1/4} \right \}}
Q(x)\frac{g(v_n)}{\sqrt \alpha_0} \left [\log \left (\frac{g(v_n)}{\sqrt \alpha_0}\right ) \right ]^{1/2} \, \ud x \\
&\quad+\frac{1}{2}\int_{\left \{x \in \mathbb{R} : g(v_n)(x) \leq \sqrt{\alpha_0} e^{1/4} \right \}}
Q(x)\frac{(g(v_n))^2}{\alpha_0}\, \ud x+ {o_n(1)}.
\end{aligned}
\end{equation}

Let us evaluate each integral on the right hand side of \eqref{3integral}.
Taking $p>1$ and using the elementary inequalities for real numbers 
\[
\begin{aligned}
e^{\alpha_0 t^2}-1-\alpha_0 t^2 \leq \; & \alpha_0 t^2 \left(e^{\alpha_0 t^2}-1\right)\\
\quad 
\left( e^{\alpha_0 t^2}-1\right)^p \leq \; & C \left(e^{p\alpha_0 t^2}-1\right),
\quad \mbox{for} \quad t \geq 0,
\end{aligned}
\]
{
together with the H\"{o}lder inequality, we obtain 
\begin{equation*}
\begin{aligned}
&\int_{\mathbb{R}} Q(x)\left( e^{\alpha_0 \tilde{u}_n^2}-1-\alpha_0 \tilde{u_n}^2 \right) \, \ud x \\
&\leq \;  \alpha_0 \|\tilde{u}_n\|^2_{L^{2s}(\mathbb R; Q)} 
\left[\int_{\mathbb{R}} Q(x)\left( e^{\alpha_0 \tilde{u}_n^2}-1\right)^{s'} \,\ud x \right]^{1/{s'}}\\
&\leq  C\alpha_0 \|\tilde{u}_n\|^2_{L^{2s}(\mathbb R; Q)} 
\left[\int_{\mathbb{R}} \left( e^{\alpha_0s^\prime \tilde{u}_n^2}-1\right) \,\ud x \right]^{1/{s'}}
\end{aligned}
\end{equation*}
where $s'=s/(s-1)$. 
If $s'(\pi -\delta\alpha_0) < \pi$ we may apply Theorem \ref{Prop:fractional TM}, to find a constant $C>0$ such that}
\[
\int_{\mathbb{R}} Q(x) \left( e^{\alpha_0 \tilde{u}_n^2}-1\right) \; \ud x  \leq \alpha_0 \| \tilde{u}_n\|_{L^2(\mathbb R; Q)}^2+ C \| \tilde{u}_n\|_{L^{2s}(\mathbb R;Q)}^2.
\]
Now using compactness result from Lemma \ref{compact-a} in the above inequality, we get
\begin{equation}\label{muricoquinha}
\int_{\mathbb{R}} Q(x)\left( e^{\alpha_0 \tilde{u}_n^2}-1\right) \,\ud x  \rightarrow0, \quad \mbox{as}\quad n\rightarrow\infty.
\end{equation}
The second integral in \eqref{3integral} may be estimated as follows:
Considering $\beta_1>\alpha_0$, we may find {$C=C(\beta_1)>0$}, such that for any $t\geq 0$
\[
g(t)\leq C e^{\beta_1 t^2}.
\]
Hence
\begin{equation}\label{Bolinha}
\log\left(\frac{g(t)}{\sqrt{\alpha_0}} \right) \leq \log\left(\frac{C}{\sqrt{\alpha_0}} \right) + \beta_1 t^2.
\end{equation}
Using condition $(H2)$, there exists a positive constant $C$ independent of $n$ such that 
\begin{equation}\label{Bolao}
v_n(x) \geq C \quad \mbox{for all} \quad  x\in B_n
\end{equation}
where 
\[
B_n:=\left \{x \in \mathbb{R} :g(v_n)(x) \geq \sqrt{\alpha_0} e^{1/4} \right \}.
\]
Thus using \eqref{Bolinha}-\eqref{Bolao} we get
\begin{equation}\label{Rouxinol}
\begin{aligned}
&\int_{B_n}
Q(x)\frac{g(v_n)}{\sqrt \alpha_0} \left [\log \left (\frac{g(v_n)}{\sqrt \alpha_0}\right ) \right ]^{1/2} \, \ud x \\
&\leq
{\frac{1}{\sqrt{\alpha_0}}\sqrt{\log\left(\frac{C}{\sqrt{\alpha_0}}\right)}} \int_{B_n}
Q(x)g(v_n) \, \ud x
+
\frac{\sqrt{\beta_1}}{\sqrt{\alpha_0}} 
\int_{\mathbb{R}}
Q(x)g(v_n)v_n \, \ud x.
\end{aligned}
\end{equation}
We shall prove that
\begin{equation}\label{sabia}
\lim_{n\rightarrow \infty}\int_{B_n}
{Q(x)g(v_n) \, \ud x=0.}
\end{equation}
Indeed, given $M>1$, we consider $A_M^n:=\{x\in \mathbb{R}: v_n(x) \geq M\}$.  By $(H1)$ and $(H2)$, there exists $C>0$, independent of $n\in \mathbb{N}$, such that $g(v_n(x))v_n(x) \leq C v_n^2(x)$, for every $x\in B_n \setminus A_M^n$. Consequently, by \eqref{mulecada-1}, 
\[
\begin{aligned}
&\int_{B_n}
{Q(x)g(v_n) }\, \ud x = \;  \int_{B_n\setminus A_M^n}
{Q(x)g(v_n)} \, \ud x + \int_{B_n \cap A_M^n}
{Q(x)g(v_n)}\, \ud x \\
&\leq  \; \int_{\{B_n\setminus A^n_M\}\cap B^c_r(0)}Q(x)g(v_n)\, \ud x+\int_{\{B_n\setminus A^n_M\}\cap B_r(0)}Q(x)g(v_n)\, \ud x\\
&+
\frac{1}{M}\int_{\mathbb{R}}
Q(x)g(v_n)v_n \, \ud x.
\end{aligned}
\]
{Let $\epsilon>0$. For $L$ large enough and using \eqref{Bolao}, $(H2)$, the H\"older inequality together with $Q(x)<\epsilon$ for $|x|>L$, we have for some constant $\tilde C>0$
\begin{equation*}
 \int_{\{B_n\setminus A^n_M\}\cap B^c_L(0)}Q(x)g(v_n)\, \ud x\leq \epsilon C \int_{\{B_n\setminus A^n_M\}\cap B^c_L(0)}|v_n|^2\, \ud x \leq  \tilde C \epsilon.
\end{equation*}}
From compact embeddings, we have also for a fixed $L>0$
\begin{equation*}
 \int_{\{B_n\setminus A^n_M\}\cap B_L(0)}Q(x)g(v_n)\, \ud x=o_n(1).
\end{equation*}

{ Using above estimates} and noting that $M>1$ may be chosen properly large, the above inequality implies that \eqref{sabia} must hold.  

From \eqref{Rouxinol} and \eqref{sabia} we obtain 
\begin{equation}\label{Rouxinol-1}
\int_{B_n}
Q(x)\frac{g(v_n)}{\sqrt \alpha_0} \left [\log \left (\frac{g(v_n)}{\sqrt \alpha_0}\right ) \right ]^{1/2} \, \ud x 
\leq
\sqrt{\frac{\beta_1}{\alpha_0}} 
\int_{\mathbb{R}}
Q(x)g(v_n)v_n \, \ud x + \mathrm{o}_n(1).
\end{equation}
Next, we use $(H1)-(H2)$ and {Lemma \ref{compact-a}} to obtain 
\begin{equation}\label{Rouxinol-2}
\frac{1}{2}\int_{\left \{x \in \mathbb{R} : g(v_n)(x) \leq \sqrt{\alpha_0} e^{1/4} \right \}}
Q(x)\frac{(g(v_n))^2}{\alpha_0}\, \ud x 
\leq 
\frac{C}{2\alpha_0}\int_{\mathbb{R}}
Q(x)v_n^2\, \ud x 
= {o_n(1)}.
\end{equation}
{ Again using Lemma \ref{compact-a}}, \eqref{muricoquinha}, \eqref{Rouxinol-1}, \eqref{Rouxinol-2},  we may write 
\begin{equation}\label{coruja1}
\left (\frac{\pi}{\alpha_0}- \delta \right )^{{1}/{2}}\|u_n\|_{1/2}
\leq \mathrm{o}_n(1)+\sqrt{\frac{\beta_1}{\alpha_0}} 
\int_{\mathbb{R}}
Q(x) g(v_n)v_n \, \ud x. 
\end{equation}
Repeating the same argument for $v_n$, we also obtain
\begin{equation}\label{coruja2}
\left (\frac{ \pi}{\alpha_0}- \delta \right )^{{1}/{2}}\|v_n\|_{1/2}
\leq \mathrm{o}_n(1)+\sqrt{\frac{\beta_1}{\alpha_0}} 
\int_{\mathbb{R}}
P(x)f(u_n)u_n \, \ud x. 
\end{equation}
Choosing $\beta_1 \in (\alpha_0, \beta)$, the estimate \eqref{USP-1} is a direct consequence of \eqref{MST-1}, \eqref{coruja1}, \eqref{coruja2}.
The claim is proved.

Using \eqref{USP-1} we can choose $\beta$ close enough to $\alpha_0$ and
$\varepsilon>0$ such that 
$(\alpha_0 + \varepsilon) \max \{\|u_n\|_{1/2}^2, \|v_n\|_{1/2}^2\} < \pi$. Now using $(H1)-(H2)$ together with the exponential critical growth of $f$ and Theorem \ref{Prop:fractional TM} we obtain
\begin{align*}
&\int_{\mathbb{R}} P(x)f(u_n)u_n \,\ud x \leq \varepsilon \int_{\mathbb R}P(x)|u_n|^2 \,\ud x+ C_\varepsilon \int_{\mathbb{R}} P(x) u_n (e^{(\alpha_0+ \varepsilon) {u_n}^2}-1)\, \ud x\\
&\leq {\epsilon}\|u_n\|_{L^2(\mathbb R;P)}^2+C_\varepsilon \|u_n\|_{L^s(\mathbb R;P)}\left(\int_{\mathbb{R}} P(x) (e^{s'(\alpha_0+ \varepsilon) {u_n}^2}-1)\, \ud x\right)^{1/s'}
\end{align*}
where $1/s+1/s'=1$.  Now on choosing $s>2$ with $s'$ sufficiently close to $1$  such that $(\alpha_0 + \varepsilon) \max \{\|u_n\|_{1/2}^2, \|v_n\|_{1/2}^2\} s' <\pi$, we can do the similar calculations as before to control the integral in the second term of the above inequality. Hence using 
compactness from Lemma \ref{compact-a}, we obtain
\begin{equation} \label{unb1}
\int_{\mathbb{R}} P(x)f(u_n)u_n \, \ud x \to 0.
\end{equation}
Analogously, 
\begin{equation} \label{unb2}
\int_{\mathbb{R}}Q(x) g(v_n)v_n \, \ud x \to 0.
\end{equation}
Finally, using \eqref{unb1}, \eqref{unb2} and \eqref{bora-2}
we obtain 
$$
\int_{\mathbb{R}} \left( (-\Delta)^\frac14 u_n (-\Delta)^\frac14 v_n +  u_n v_n \right) \, \ud x \to 0
$$ 
which together with \eqref{Muricoca} gives a contradiction with \eqref{bora-1}.
The proof of the theorem is complete.
\section*{Acknowledgement}
Research was supported in part by INCTmat/MCT/Brazil, CNPq and CAPES/Brazil.

\section*{Appendix: Validation of Remark \ref{Val}}
In this appendix, we show the claim that under the assumptions $(H2)$ and $(H5)$,  $f$ 
(respectively $g$) belongs to the class $\mathbb C_{\mathrm P}$ (respectively $\mathbb C_{\mathrm Q}$) and that if $f(t)=O(t^2)$ then 
$u_n\rightharpoonup 0$ and $\{\int_\mathbb R P(x)f(u_n)u_n \,\ud x\}$ bounded imply that $\int_\mathbb R P(x)f(u_n)\,\ud x\to 0$.
\begin{proof}
Let $\{u_n\} \subset H^{1/2, 2}(\mathbb R)$ be a sequence such that 
\begin{equation}\label{Lbound}
u_n\rightharpoonup 0,\;\; \textrm{and}\;\; \int_\mathbb R P(x)f(u_n)u_n \,\ud x<C.
\end{equation}
For any given $\epsilon>0$, using $(H2)$ and $(H5)$, there exists $c_o=c_o(\epsilon)>0$ sufficiently small and $M>0$ sufficiently large such that
\[ 
F(t)\leq \epsilon |t|^2 \quad \text{for}\quad |t|<c_o \quad \text{ and  } \quad F(t)\leq \epsilon f(t)t \quad \textrm{ for }  t>M.
\]  
Hence from \eqref{Lbound}, we have 
\begin{equation}\label{wend}
\begin{aligned}
\int_{\mathbb{R}} P(x)F(u_n)\, \ud x&\leq \epsilon \int_{\{u_n\leq c_o\}}P(x)|u_n|^2\, \ud x+\int_{\{c_o\leq u_n\leq M\}}P(x)F(u_n)\, \ud x\\
&\quad+\epsilon\int_{\{u_n\geq M\}}P(x)f(u_n)u_n\, \ud x\\
&\leq \epsilon C+\int_{\{c_o\leq u_n\leq M\}}P(x)F(u_n)\, \ud x.
\end{aligned}
\end{equation}
Further for $L=L(\epsilon)>0$ large enough such that $P(x)<\epsilon$ for $x\in B^c_L(0)$, we have
\begin{equation}\label{est2}
\int_{\{c_o\leq u_n\leq M\}\cap B^c_L(0)}P(x)F(u_n)\, \ud x\leq \epsilon C,
\end{equation}
where $C$ is independent of $n$. Indeed, 
\[  
c_o^2| \{c_o\leq u_n\leq M\}|\leq \int_{\{c_o\leq u_n\leq M\}}|u_n|^2 \,\ud x \leq \int_{\mathbb R}|u_n|^2\,\ud x \leq C.
\]
Now by Lebesgue theorem, for a such fixed $L>0$, we have also 
\begin{equation}\label{est3}
\int_{\{c_o\leq u_n\leq M\}\cap [-L, L]}P(x)F(u_n)\, \ud x\to 0.
\end{equation}
Gathering \eqref{wend}-\eqref{est3},
we get 
\begin{equation*}
\int_{\mathbb{R}} P(x)F(u_n) \, \ud x \rightarrow 0. 
\end{equation*}
It finishes the proof of the first part of the claim. Next, we show the second statement of the claim. From $f(t)=O(t^2)$ and for $c_o$, $M>0$ respectively small and large enough, we have 
\begin{align*}
\int_\mathbb R P(x) f(u_n)\,\ud x&=\int_{\{u_n< c_o\}}P(x) f(u_n) \,\ud x+\int_{\{c_o\leq  u_n\leq M\}}P(x) f(u_n) \,\ud x\\
&\quad+ 1/M\int_{\{u_n> M\}} P(x)f(u_n)u_n
\,\ud x\\
 &\leq C \int_\mathbb R P(x) u_n^2 \,\ud x +\int_{\{c_o\leq  u_n\leq M\}}P(x) f(u_n) \,\ud x+C/M.
\end{align*}
Using Lemma \ref{compact-a} and estimating the second integral in the above inequality in a similar way as in \eqref{est2} and \eqref{est3}, we get the required result.
\end{proof}
\begin{remark}\label{Vanishing}
Let the function $P$ be defined as $P(x)=\frac{1}{(|x|+1)^\epsilon}$, for $\epsilon>0$ sufficiently small. Consider the sequence $\{u_n\}\subset H^{1/2,2}(\mathbb R)$ such that
\begin{eqnarray*}
u_n(x)=\displaystyle
\begin{cases}
\frac{1}{n^\alpha},& \mbox{ for }x\in [n,2n],\\
 \frac{(x-(n-1))}{n^\alpha},& \mbox{ for }x\in [n-1,n],\\
 \frac{((2n+1)-x)}{n^\alpha},& \mbox{ for }x\in [2n,2n+1],\\
0& \mbox{elsewhere}
\end{cases}
\end{eqnarray*}
 with $\alpha \in [1/2,1)$. Then, by straighforward calculations, we can prove $\{u_n\}$ is bounded in $H^{1/2,2}(\mathbb R)$. Furthermore, as $n\to\infty$
\begin{equation*}
\int_\R P(x)u_n^q\,\ud x\to 0
\end{equation*}
if and only if $q$ satisfies $\alpha q+\epsilon>1$. Therefore if $f$ is of $O(t^{1+\ell})$ near $0$ with $0<\ell\leq\frac{1-\alpha-\epsilon}{\alpha}$, we easily get that $u_n\rightharpoonup 0$ weakly in $H^{1/2,2}(\mathbb R)$ and 
\begin{equation*}
\int_\R P(x)f(u_n)u_n\,\ud x\to 0
\end{equation*}
as $n\to\infty$. However, $\int_\R P(x)f(u_n)\,\ud x\to 0$ is not verified.
\end{remark}
%\section*{References}

%\section*{References}
%\bibliographystyle{abbrv}
%\bibliography{Citation}
\end{document}